\newtheorem{thm}{Theorem}[section]
\newtheorem{cor}[thm]{Corollary}
\newtheorem{prop}[thm]{Proposition}
\theoremstyle{mydefinition}
\newtheorem{dfn}[thm]{Definition}
\theoremstyle{myremark}
\newtheorem{rem}[thm]{Remark}
\newtheorem{exa}[thm]{Example}
\title{MacMahon's $\Omega_\geq$ operator: A computational framework}
\author{Feihu Liu$^{\color{blue} \dag}$ and Guoce Xin$^{\color{blue} \S}$
\\[2mm]
{\small $^{\color{blue} \dag, \S}$ School of Mathematical Sciences,}\\[-0.8ex]
{\small Capital Normal University, Beijing, 100048, P.R.~China}\\
{\small $^{\color{blue} \dag}$ Email address: feihu.liu@cnu.edu.cn}\\
{\small $^{\color{blue} \S}$ Email address: guoce\_xin@163.com}
}
\date{\today}
\begin{document}

\maketitle

\begin{abstract}
MacMahon introduced partition analysis in his book ``Combinatory Analysis'' as a computational technique for solving problems related to systems of linear Diophantine equations and inequalities. This paper aims to develop a fundamental computational method for MacMahon's partition analysis. As applications, we present simplified computations for ``Han's formula'', the ``$k$-gon partitions problem'', and the ``two-dimensional problem''. Moreover, we apply our method to solve a challenging problem.
\end{abstract}

\noindent
\begin{small}
 \emph{Mathematics subject classification}: Primary 05A15; Secondary 05A17, 11P81.
\end{small}

\noindent
\begin{small}
\emph{Keywords}: MacMahon's partition analysis; Omega operator; Partial fraction decomposition; Elliott rational function; $k$-gon partition.
\end{small}

\section{Introduction}

In enumerative combinatorics, \emph{partition analysis} was first introduced by MacMahon in his famous book, ``Combinatorial Analysis'' \cite{MacMahon19}, published in 1915. In this work, MacMahon primarily investigated plane partitions by introducing the omega operator $\mathrm{\Omega}_\geq$.

We briefly outline MacMahon's partition analysis. Let $\mathbb{F}$ be a field. A formal Laurent series in $\lambda_1,\dots,\lambda_k$ is defined as a series that can be expressed in the form
\[
\sum_{i_1=-\infty}^{\infty}\cdots \sum_{i_k=-\infty}^{\infty} a_{i_1\ldots i_k}\lambda_1^{i_1} \cdots \lambda_k^{i_k},
\]
where each $a_{i_1\ldots i_k}$ is an element of $\mathbb{F}$. For such formal Laurent series, the definition of the omega operator $\mathrm{\Omega}_\geq$ as given in \cite{MacMahon19} is clear:
\begin{dfn}\label{dfn-natural}
The operator $\mathrm{\Omega}_\geq$ acts on a formal Laurent series in $\lambda_1,\dots, \lambda_k$ with coefficients $a_{i_1,\dots ,i_k}$ in $\mathbb{F}$ by
\[
\underset{\geq}{\mathrm{\Omega}}\sum_{i_1=-\infty}^{\infty}\cdots \sum_{i_k=-\infty}^{\infty} a_{i_1\ldots i_k}\lambda_1^{i_1} \cdots \lambda_k^{i_k} = \sum_{i_1=0}^{\infty}\cdots \sum_{i_k=0}^{\infty} a_{i_1\ldots i_k}.
\]
\end{dfn}
In particular, if $F(\lambda)$ is a rational function in $\lambda$ whose series expansion contains no negative power in $\lambda$, then $\mathrm{\Omega}_\geq F(\lambda) = F(1)$.

We illustrate the operator with a simple example. Let $\mathbb{N}$ denote the set of all non-negative integers. Suppose $a_1, a_2 \in \mathbb{N}$ with $a_1 \geq a_2$. Consider the generating function
\[
F(x_1,x_2) = \sum_{a_1 \geq a_2 \geq 0} x_1^{a_1} x_2^{a_2}.
\]
Introducing a variable $\lambda$, we obtain
\begin{equation}\label{omegact1}
F(x_1,x_2) = \underset{\geq}{\mathrm{\Omega}} \sum_{a_1, a_2 \geq 0} \lambda^{a_1 - a_2} x_1^{a_1} x_2^{a_2} = \underset{\geq}{\mathrm{\Omega}} \frac{1}{(1 - x_1 \lambda)(1 - \frac{x_2}{\lambda})}.
\end{equation}
To compute this, we need to eliminate $\lambda$ from \eqref{omegact1}. The method for this calculation will be introduced later.

\begin{rem}
In \cite{Andrews12}, the coefficients $a_{i_1\ldots i_k}$ from Definition \ref{dfn-natural} are defined over the field of rational functions in several complex variables over $\mathbb{C}$, with each $\lambda_i$ restricted to a neighborhood of the circle $|\lambda_i|=1$. Furthermore, these coefficients are required to ensure that all series involved are absolutely convergent within their domain of definition.
\end{rem}

Note that the operator $\mathrm{\Omega}_\geq$ is supposed to act only on the Greek letters $\lambda$ or $\mu$, or on the corresponding indexed versions such as $\lambda_i$ and $\mu_i$ in this paper. The parameters unaffected by $\mathrm{\Omega}_\geq$ will be denoted by letters from the Latin alphabet.

An \emph{Elliott rational function} $E$ takes the form
\begin{align}\label{EEcoreMPA}
E = \frac{L}{(1 - M_1(X) \Lambda^{c_1}) (1 - M_2(X) \Lambda^{c_2}) \cdots (1 - M_n(X) \Lambda^{c_n})},
\end{align}
where each $M_i(X)$ is a monomial in the variables $X = \{x_1, \ldots, x_m\}$ that does not involve $\Lambda = \{\lambda_1, \ldots, \lambda_k\}$; $L$ is a Laurent polynomial in $X$ and $\Lambda$. The \emph{core problem} in MacMahon's partition analysis is to compute $\mathrm{\Omega}_\geq E$, which amounts to eliminating the variables $\{\lambda_1, \ldots, \lambda_k\}$ to obtain a rational function in $\{x_1, \ldots, x_m\}$.

MacMahon's partition analysis is particularly suited for solving problems related to counting solutions to linear Diophantine equations and inequalities. Notable applications include enumerating lattice points in convex polytopes and computing Ehrhart quasi-polynomials.

The technique has been revisited by Andrews, Paule, and Riese in a series of papers \cite{Andrews1, Andrews2, Andrews3, Andrews4, Andrews5, Andrews6, Andrews7, Andrews8, Andrews9, Andrews10, Andrews11, Andrews12, Andrews13, Andrews14, Andrews15}, where computer algebra plays a central role. Their work led to the development of an algorithm and corresponding software, such as the \texttt{Omega} package in \texttt{Mathematica} \cite{Andrews3}. The package was later upgraded to \texttt{Omega2} in 2001 \cite{Andrews6}.

In 2003, Han \cite{han} presented a general algorithm for evaluating the MacMahon $\Omega_{\geq}$ operator and developed a \texttt{Maple} package called \texttt{GenOmega}.

In 2017, Breuer and Zafeirakopoulos \cite{PolyhedralOmega} integrated MacMahon's iterative approach based on the Omega operator with explicit formulas for its evaluation and geometric tools such as Brion decompositions and Barvinok's short rational function representations. They introduced a new algorithm for solving linear Diophantine systems, implemented in the \texttt{Sage} package \texttt{Polyhedral Omega}.

In \cite{Xin04}, Xin devised an efficient algorithm for MacMahon's partition analysis by combining the theory of iterated Laurent series with an algorithm for partial fraction decompositions. This algorithm is implemented in the \texttt{Maple} package \texttt{Ell}, which achieves a significantly faster practical running time compared to \texttt{Omega}.
In \cite{Xin15}, Xin proposed a Euclid-style algorithm, with an implementation in the \texttt{Maple} package \texttt{CTEuclid}. This algorithm performs effectively and overcomes several bottlenecks encountered in the \texttt{Ell} package.

In this paper, we introduce a fundamental computational method for evaluating $\mathrm{\Omega}_\geq\ E$. Our approach leverages the key ideas from the partial fraction decomposition algorithm and the rules for series expansion of multivariate rational functions in \cite{Xin04}, and the technique for calculating the contribution of a single factor in \cite{Xin15}. 
The second author previously addressed the problem of extracting constant terms from multivariate rational functions in the same work \cite{Xin15}. Here, we extend certain ideas from that study to the operator $\mathrm{\Omega}_\geq$, offering a parallel but distinct computational framework.

Our method enables straightforward computation of numerous examples from MacMahon's book \cite{MacMahon19} and a series of papers by Andrews and his coauthors \cite{Andrews1, Andrews2, Andrews3, Andrews4, Andrews5, Andrews6, Andrews7, Andrews8, Andrews9, Andrews10, Andrews11, Andrews12, Andrews13, Andrews14, Andrews15}. Previous computational methods mainly rely on the definition of $\mathrm{\Omega}_\geq$ or the \texttt{Omega} package.
As applications, we demonstrate simplified computations for problems such as the ``$k$-gon partitions problem", ``two-dimensional problem", ``a hard problem", and others.

This paper is organized as follows. In Section 2, we present our fundamental computational approach for calculating $\mathrm{\Omega}_\geq\ E$. In Section 3, we illustrate the application of our method through several propositions. Section 4 provides a concise procedure for MacMahon's partition analysis, along with an example to demonstrate the usage of the commands. In Section 5, we reprove two problems with a relatively simple process and solve a hard problem.

\section{Fundamental Calculation Method}

We now present a fundamental computational method for the $\mathrm{\Omega}_\geq$ operator.
In \cite{Xin04}, Xin introduced the field of iterated Laurent series to establish the uniqueness of series expansions for rational functions. Define the working field as $\mathbb{K} = \mathbb{C}((\lambda_k))((\lambda_{k-1}))\cdots((\lambda_1))$.
We adopt terminology from \cite{Xin15}.

Note that every monomial $M \neq 1$ is comparable to $1$ in $\mathbb{K}$ according to the following rule: identify the ``smallest" variable $\lambda_j$ that appears in $M$, i.e., $\deg_{\lambda_i} M = 0$ for all $i < j$. If $\deg_{\lambda_j} M > 0$, then $M$ is said to be \emph{small}, denoted $M < 1$; otherwise, $M$ is \emph{large}, denoted $M > 1$. Consequently, we can determine which of the following series expansions holds in $\mathbb{K}$:
\begin{align*}
\dfrac{1}{1-M} = \begin{cases}
\sum_{i \geq 0} M^i & \text{if } M < 1, \\
\dfrac{1}{-M(1 - 1/M)} = \sum_{i \geq 0} -\dfrac{1}{M^{i+1}} & \text{if } M > 1.
\end{cases}
\end{align*}
Let $\overline{\mathbb{K}}=\mathbb{K}((x_1))\cdots((x_m))$. For Equation \eqref{EEcoreMPA}, when expanding $E$ as a series in $\overline{\mathbb{K}}$, we typically require $E$ to be in its \emph{proper form}, meaning that $M_i(x) \Lambda^{c_i} < 1$ in $\overline{\mathbb{K}}$ for all $i$.

We now employ Xin's partial fraction decomposition algorithm \cite{Xin15} to compute $\mathrm{\Omega}_\geq E$ in $\overline{\mathbb{K}}$ with respect to $\lambda$.
For convenience, we express $E$ in the following form:
\begin{equation}
E(\lambda) = \dfrac{L(\lambda)}{\prod_{i=1}^n (1 - u_i \lambda^{a_i})} \quad \text{(not in proper form)}, \label{NEE}
\end{equation}
where $L(\lambda)$ is a Laurent polynomial, each $u_i$ is a monomial in $\{x_1, \ldots, x_m\} \cup (\Lambda \setminus \{\lambda\})$, and all $a_i$ are \textbf{positive integers}. Note that each $u_i$ is free of $\lambda$.

\begin{thm}\label{propEPFD}
Assume that the partial fraction decomposition of $E(\lambda)$ is given by
\begin{equation}\label{EE}
E(\lambda) = P(\lambda) + \frac{p(\lambda)}{\lambda^s} + \sum_{i=1}^n \frac{A_i(\lambda)}{1 - u_i \lambda^{a_i}},
\end{equation}
where each $u_i$ is independent of $\lambda$; $P(\lambda)$, $p(\lambda)$, and $A_i(\lambda)$ are polynomials, $\deg p(\lambda) < s$, and $\deg A_i(\lambda) < a_i$ for all $i$. Then,
\begin{equation}\label{EE222Omega}
\underset{\geq}{\mathrm{\Omega}} E(\lambda) = P(1) + \sum_{u_i \lambda^{a_i} < 1} \frac{A_i(1)}{1 - u_i}.
\end{equation}
\end{thm}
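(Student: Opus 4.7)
My plan is to apply $\mathrm{\Omega}_\geq$ term by term to the partial fraction decomposition \eqref{EE} using linearity, and verify that the only surviving contributions are $P(1)$ and the small-factor terms. The proof reduces to a bookkeeping of $\lambda$-exponents in the series expansion in $\overline{\mathbb{K}}$, using the degree restrictions $\deg p < s$ and $\deg A_i < a_i$.

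First, I would handle the easy pieces. The polynomial $P(\lambda)$ contains only non-negative powers of $\lambda$ (with $\lambda$-free coefficients), so directly from Definition~\ref{dfn-natural}, $\mathrm{\Omega}_\geq P(\lambda) = P(1)$. Next, because $\deg p(\lambda) < s$, every monomial of $p(\lambda)/\lambda^s$ has $\lambda$-exponent in $\{-s, -s+1, \ldots, -1\}$, i.e.\ strictly negative, so $\mathrm{\Omega}_\geq$ annihilates this term.

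The substantive work is the analysis of each factor $A_i(\lambda)/(1 - u_i \lambda^{a_i})$. Here I would split into two cases according to the monomial ordering in $\overline{\mathbb{K}}$ described earlier.

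\emph{Case 1: $u_i \lambda^{a_i} < 1$.} Then the valid series expansion in $\overline{\mathbb{K}}$ is $\tfrac{1}{1 - u_i\lambda^{a_i}} = \sum_{j \geq 0}(u_i\lambda^{a_i})^j$, whose $\lambda$-exponents are $0, a_i, 2a_i,\ldots$, all non-negative. Multiplying by $A_i(\lambda)$ (a polynomial in $\lambda$) keeps every $\lambda$-exponent non-negative. Hence $\mathrm{\Omega}_\geq$ acts by setting $\lambda = 1$, producing $A_i(1)/(1 - u_i)$.

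\emph{Case 2: $u_i \lambda^{a_i} > 1$.} Now the correct expansion is
\[
\frac{1}{1 - u_i\lambda^{a_i}} = -\sum_{j \geq 1} u_i^{-j}\lambda^{-a_i j}.
\]
The key observation, and the only point requiring care, is that multiplying by $A_i(\lambda)$ still leaves every $\lambda$-exponent strictly negative. Indeed, a generic term is $c\,\lambda^{r}\cdot u_i^{-j}\lambda^{-a_i j}$ with $0 \le r \le \deg A_i < a_i$ and $j \ge 1$; the resulting $\lambda$-exponent is $r - a_i j \le (a_i - 1) - a_i = -1 < 0$. This is the one place where the hypothesis $\deg A_i < a_i$ is essential, and it is the subtle step I would verify carefully. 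It follows that $\mathrm{\Omega}_\geq$ annihilates this term.

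Combining the four evaluations and summing yields exactly \eqref{EE222Omega}. I do not anticipate any real obstacle; the only pitfall is remembering that $u_i$ may contain the other $\lambda_j$'s and $x_\ell$'s, so the classification $u_i\lambda^{a_i} \lessgtr 1$ must be made inside the iterated Laurent series field $\overline{\mathbb{K}}$, not merely by looking at $\lambda$. Since the $\mathrm{\Omega}_\geq$ under consideration only collapses the $\lambda$-variable and treats the $u_i$'s as coefficients, this causes no actual interference with the argument above.
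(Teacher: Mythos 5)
Your proof is correct and follows essentially the same route as the paper's: both arguments dispose of $P(\lambda)$ and $p(\lambda)/\lambda^s$ directly from Definition~\ref{dfn-natural}, and both split each fraction $A_i(\lambda)/(1-u_i\lambda^{a_i})$ into the small and large cases, using $\deg A_i < a_i$ to show the large case is annihilated (the paper phrases this by rewriting the term as $\lambda^{-a_i}A_i(\lambda)/(-u_i(1-\tfrac{1}{u_i\lambda^{a_i}}))$, which is the same exponent bookkeeping you carry out explicitly). No substantive difference.
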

\begin{proof}
By the definition of $\mathrm{\Omega}_{\geq}$ (Definition \ref{dfn-natural}), if $u_i \lambda^{a_i} < 1$, then
\[
\underset{\geq}{\mathrm{\Omega}} \frac{A_i(\lambda)}{1 - u_i \lambda^{a_i}} = \frac{A_i(1)}{1 - u_i}.
\]
If $u_i \lambda^{a_i} > 1$, we have
\[
\underset{\geq}{\mathrm{\Omega}} \frac{A_i(\lambda)}{1 - u_i \lambda^{a_i}} = \underset{\geq}{\mathrm{\Omega}} \frac{\lambda^{-a_i} A_i(\lambda)}{-u_i \left(1 - \frac{1}{u_i \lambda^{a_i}}\right)} = 0.
\]
The proof is completed by applying Definition \ref{dfn-natural}.
\end{proof}

\begin{thm}{\em \cite{Xin15}}\label{charAS}
Let $E(\lambda)$ be as in \eqref{EE}. Then $A_i(\lambda)$ is uniquely characterized by
\begin{equation}
\left\{
             \begin{array}{lr}
             A_i(\lambda) \equiv E(\lambda) \cdot (1 - u_i \lambda^{a_i}) & \mod \langle 1 - u_i \lambda^{a_i} \rangle, \\
             \deg_{\lambda} A_i < a_i, &
             \end{array}
\right.\label{AS}
\end{equation}
where $\langle 1 - u \lambda^a \rangle$ denotes the ideal generated by $1 - u \lambda^a$.
\end{thm}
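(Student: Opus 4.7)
My plan is to split the argument into uniqueness and existence, both working inside $\mathbb{K}[\lambda]$ (or, when convenient, inside $\mathbb{K}[\lambda,\lambda^{-1}]$, noting that $\lambda$ is a unit modulo $\langle 1 - u_i \lambda^{a_i}\rangle$ since $u_i\lambda^{a_i}\equiv 1$ forces $\lambda\cdot(u_i\lambda^{a_i-1})\equiv 1$).

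For uniqueness, I would suppose that both $A_i(\lambda)$ and $\widetilde{A}_i(\lambda)$ satisfy \eqref{AS}. Their difference $D(\lambda)$ then has $\deg_\lambda D < a_i$ while lying in $\langle 1 - u_i \lambda^{a_i}\rangle$. Because the generator $1 - u_i \lambda^{a_i}$ has degree exactly $a_i$ with invertible leading coefficient $-u_i \in \mathbb{K}^\times$, the polynomial division algorithm in $\mathbb{K}[\lambda]$ forces $D(\lambda) = 0$.

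For the congruence, I would multiply the partial fraction decomposition \eqref{EE} through by $(1 - u_i \lambda^{a_i})$, obtaining
\[
E(\lambda)(1 - u_i \lambda^{a_i}) = P(\lambda)(1 - u_i \lambda^{a_i}) + \frac{p(\lambda)(1 - u_i \lambda^{a_i})}{\lambda^s} + A_i(\lambda) + \sum_{j \neq i} \frac{A_j(\lambda)(1 - u_i \lambda^{a_i})}{1 - u_j \lambda^{a_j}},
\]
and reduce modulo $\langle 1 - u_i \lambda^{a_i}\rangle$. The first two summands obviously vanish (the Laurent piece $\lambda^{-s} p(\lambda)(1-u_i\lambda^{a_i})$ is handled by the unit property of $\lambda$ mentioned above). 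Each cross term with $j\neq i$ has the factor $1 - u_i \lambda^{a_i}$ in the numerator, so provided $1 - u_j \lambda^{a_j}$ is a unit in the quotient, the entire summand reduces to zero. Only $A_i(\lambda)$ survives, giving precisely the claimed congruence.

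The main obstacle I expect is justifying invertibility of $1 - u_j \lambda^{a_j}$ modulo $\langle 1 - u_i \lambda^{a_i}\rangle$ for $j \neq i$. This reduces to pairwise coprimality of the denominator factors $\{1 - u_\ell \lambda^{a_\ell}\}$ in $\mathbb{K}[\lambda]$: if two factors shared a common root $\alpha$, then $u_i\alpha^{a_i}=u_j\alpha^{a_j}=1$ would collide with the hypothesis that \eqref{EE} is a genuine simple-pole partial fraction decomposition. Once coprimality is established, the extended Euclidean algorithm supplies the required inverse in the quotient and the verification goes through.
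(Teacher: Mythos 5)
The paper does not prove this theorem at all---it is imported verbatim from \cite{Xin15}---so there is no in-paper proof to compare against; your argument is the standard one from that reference (multiply by the factor, reduce modulo the ideal using that $\lambda$ and the remaining denominator factors are units in the quotient, and get uniqueness from the degree bound against the degree-$a_i$ generator with invertible leading coefficient $-u_i$), and it is correct. The one point you rightly isolate, pairwise coprimality of the factors $1-u_j\lambda^{a_j}$, is indeed the hidden hypothesis: if two factors share a root, the decomposition \eqref{EE} with $\deg A_i<a_i$ either fails to exist or fails to be unique, so your appeal to the well-posedness of \eqref{EE} is the right way to discharge it.
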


The primary objective is to compute $\mathrm{\Omega}_\geq E(\lambda)$ for $\lambda$ as in \eqref{NEE} in $\overline{\mathbb{K}}$.
We aim to compute
\[
\mathcal{T}_{1 - u_i \lambda^{a_i}} E(\lambda) := \frac{A_i(1)}{1 - u_i},
\]
where $A_i(\lambda)$ is characterized by \eqref{AS}. Therefore, in this new notation, Theorem \ref{propEPFD} can be expressed as
\begin{align}\label{EP0}
\underset{\geq}{\mathrm{\Omega}} E(\lambda) = P(1) + \sum_{i} \chi(u_i \lambda^{a_i} < 1) \mathcal{T}_{1 - u_i \lambda^{a_i}} E(\lambda),
\end{align}
where $\chi(\text{true}) = 1$ and $\chi(\text{false}) = 0$.

The following result can be used to avoid computing $P(1)$.
\begin{thm}\label{LemE}
Let $E(\lambda)$ be as in \eqref{NEE}. If $E(\lambda)$ is proper in $\lambda$, i.e., the degree of the numerator is less than that of the denominator, then
\begin{align}\label{CE}
\underset{\geq}{\mathrm{\Omega}}\ E(\lambda)=\sum_{i=1}^n\left(\chi(u_i\lambda^{a_i}<1)\cdot\frac{A_i(1)}{1-u_i}\right).
\end{align}
If $E(0)=\lim_{\lambda\rightarrow 0}E(\lambda)$ and $E(1)$ exists, then
\begin{align}\label{DCE}
\underset{\geq}{\mathrm{\Omega}}\ E(\lambda)=E(1)-\sum_{i=1}^n\left(\chi(u_i\lambda^{a_i}>1)\cdot\frac{A_i(1)}{1-u_i}\right).
\end{align}
\end{thm}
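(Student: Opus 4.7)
The plan is to derive both identities \eqref{CE} and \eqref{DCE} directly from the partial fraction decomposition \eqref{EE} combined with Theorem \ref{propEPFD}. The strategy is to identify which of the three components in \eqref{EE}, namely the polynomial part $P(\lambda)$, the pole-at-zero part $p(\lambda)/\lambda^s$, and the simple fractions $A_i(\lambda)/(1-u_i\lambda^{a_i})$, survive or vanish under the given hypotheses.

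For \eqref{CE}, properness of $E(\lambda)$ in $\lambda$ means, by uniqueness of the decomposition, that the polynomial quotient $P(\lambda)$ vanishes, so $P(1)=0$. The pole-at-zero term $p(\lambda)/\lambda^s$ is permitted to be nonzero (since $L$ is only assumed to be a Laurent polynomial), but because $\deg p < s$ it consists entirely of strictly negative powers of $\lambda$, and such pieces are annihilated by $\mathrm{\Omega}_\geq$; this contribution is already absent from \eqref{EE222Omega}. Substituting $P(1)=0$ into \eqref{EE222Omega} then gives \eqref{CE} at once.

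For \eqref{DCE}, I would use the two numerical values $E(0)$ and $E(1)$ to recover the pieces of \eqref{EE} that are invisible in \eqref{EE222Omega}. First, existence of $E(0)$ forces $p(\lambda)/\lambda^s\equiv 0$: each denominator factor $1-u_i\lambda^{a_i}$ evaluates to $1$ at $\lambda=0$, so a nonzero $p$ would produce a pole there. Second, existence of $E(1)$ guarantees $1-u_i\neq 0$ for every $i$, which legitimizes the substitution $\lambda=1$ in \eqref{EE} and yields the identity $E(1)=P(1)+\sum_{i=1}^n A_i(1)/(1-u_i)$. Subtracting \eqref{EE222Omega} from this eliminates $P(1)$ together with the small-factor contributions, leaving exactly the large-factor sum with a minus sign, which is precisely \eqref{DCE}.

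The only subtle point, and hence the main obstacle I would flag, is the vanishing of the term $p(\lambda)/\lambda^s$ in both parts: by the annihilation property of $\mathrm{\Omega}_\geq$ on purely negative powers of $\lambda$ in part one, and by finiteness of $E(0)$ in part two. Once this bookkeeping is made explicit, both \eqref{CE} and \eqref{DCE} follow as one-line consequences of Theorem \ref{propEPFD} together with a single substitution at $\lambda=1$ in the partial fraction decomposition.
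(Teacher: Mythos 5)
Your proposal is correct and follows essentially the same route as the paper's proof: properness forces $P(\lambda)=0$ for \eqref{CE}, and existence of $E(0)$ kills $p(\lambda)/\lambda^s$ so that setting $\lambda=1$ in \eqref{EE} and subtracting \eqref{EP0} yields \eqref{DCE}. Your explicit remark that the purely negative-power term $p(\lambda)/\lambda^s$ is annihilated by $\mathrm{\Omega}_\geq$ in the first part is a small clarification the paper leaves implicit in \eqref{EE222Omega}, but it does not change the argument.
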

\begin{proof}
Assume the partial fraction decomposition given in \eqref{EE}.
(i) If $E(\lambda)$ is proper in $\lambda$, then $P(\lambda)=0$, and the first equality holds.
(ii) If $E(0)$ exists, then $p(\lambda)$ must be identically zero. Setting $\lambda=1$ and applying \eqref{EP0} yields
\[
E(1)=P(1)+\sum_{i=1}^{n}\frac{A_i(1)}{1-u_i}=\underset{\geq}{\mathrm{\Omega}}\ E(\lambda)+\sum_{i=1}^n\chi(u_i\lambda^{a_i}>1)\mathcal{T}_{1-u_i\lambda^{a_i}}E(\lambda).
\]
The second equality follows by subtracting the sum on the right-hand side.
\end{proof}

Equation \eqref{DCE} can be viewed as a dual form of \eqref{EP0}. Due to these two formulas, it is convenient to refer to the denominator factor $1-u_i\lambda^{a_i}$ as \emph{contributing} when $u_i\lambda^{a_i}$ is small and as \emph{dually contributing} when $u_i\lambda^{a_i}$ is large.

For convenience, we introduce the following notation.
\begin{dfn}\label{UnderlinedContr}
We denote
\begin{align*}
\underset{\geq}{\mathrm{\Omega}}\ \underline{\dfrac{1}{1-u_i\lambda^{a_i}}}E(\lambda)(1-u_i\lambda^{a_i})
=\mathcal{T}_{1-u_i\lambda^{a_i}}E(\lambda)=\frac{A_i(1)}{1-u_i}.
\end{align*}
\end{dfn}
In this notation, when applying $\mathrm{\Omega}_\geq$ in $\lambda$, only the single underlined denominator factor contributes.

Theorems \ref{propEPFD}, \ref{charAS}, and \ref{LemE} establish the fundamental computational framework for the $\mathrm{\Omega}_\geq$ operator. Definition \ref{UnderlinedContr} provides a convenient notational shorthand that streamlines our presentation.
To illustrate the application of our method, we now consider the following example.

\begin{exa}{\em \cite[Page 102]{MacMahon19}}\label{Exam112}
We compute the identity
\begin{align*}
\underset{\geq}{\mathrm{\Omega}}\frac{1}{(1-x\lambda)(1-y\lambda)(1-\frac{z}{\lambda})}=\frac{1-xyz}{(1-x)(1-y)(1-xz)(1-zy)}
\end{align*}
using two distinct approaches. For clarity, we provide detailed step-by-step explanations to facilitate the reader's understanding of our notation.

\textbf{First approach:} We proceed as follows:
\begin{align*}
&\underset{\geq}{\mathrm{\Omega}}\frac{1}{(1-x\lambda)(1-y\lambda)(1-\frac{z}{\lambda})}
=\underset{\geq}{\mathrm{\Omega}}\frac{1}{\underline{(1-x\lambda)}\underline{(1-y\lambda)}(1-\frac{z}{\lambda})} \quad (\text{since both factors contribute}) \\
=&\underset{\geq}{\mathrm{\Omega}}\frac{1}{\underline{(1-x\lambda)}(1-y\lambda)(1-\frac{z}{\lambda})}
+\underset{\geq}{\mathrm{\Omega}}\frac{1}{(1-x\lambda)\underline{(1-y\lambda)}(1-\frac{z}{\lambda})} \\
=&\underset{\geq}{\mathrm{\Omega}}\frac{1}{\underline{(1-x\lambda)}(1-x^{-1}y)(1-xz)}
+\underset{\geq}{\mathrm{\Omega}}\frac{1}{(1-xy^{-1})\underline{(1-y\lambda)}(1-yz)} \quad (\text{by Theorem \ref{charAS}}) \\
=&\frac{1}{(1-x)(1-x^{-1}y)(1-xz)}+\frac{1}{(1-xy^{-1})(1-y)(1-yz)} \quad(\text{setting } \lambda=1 \text{ via \eqref{EE222Omega}}) \\
=&\frac{1-xyz}{(1-x)(1-y)(1-xz)(1-zy)}.
\end{align*}
At the third equality, we apply the operations modulo $\langle 1-x\lambda\rangle$ and modulo $\langle 1-y\lambda\rangle$, respectively, as justified by Theorem \ref{charAS}.

\textbf{Second approach:} By considering the dual contribution, we derive:
\begin{align*}
&\underset{\geq}{\mathrm{\Omega}}\frac{1}{(1-x\lambda)(1-y\lambda)(1-\frac{z}{\lambda})} \\
=&\frac{1}{(1-x)(1-y)(1-z)}-\underset{\geq}{\mathrm{\Omega}}\frac{1}{(1-x\lambda)(1-y\lambda)\underline{(1-\frac{z}{\lambda})}} \quad (\text{by \eqref{DCE}}) \\
=&\frac{1}{(1-x)(1-y)(1-z)}-\underset{\geq}{\mathrm{\Omega}}\frac{-\lambda z^{-1}}{(1-x\lambda)(1-y\lambda)\underline{(1-\frac{\lambda}{z})}} \quad(\text{since } a_i \text{is a positive integer in \eqref{NEE}}) \\
=&\frac{1}{(1-x)(1-y)(1-z)}
-\underset{\geq}{\mathrm{\Omega}}\frac{-1}{(1-xz)(1-yz)\underline{(1-\frac{\lambda}{z})}} \quad (\text{by Theorem \ref{charAS}}) \\
=&\frac{1}{(1-x)(1-y)(1-z)}
-\frac{-1}{(1-xz)(1-yz)(1-\frac{1}{z})} \quad( \text{setting } \lambda=1 \text{ via \eqref{DCE}}) \\
=&\frac{1-xyz}{(1-x)(1-y)(1-xz)(1-zy)}.
\end{align*}
In fact, the second approach can be simplified as follows:
\begin{align*}
&\underset{\geq}{\mathrm{\Omega}}\frac{1}{(1-x\lambda)(1-y\lambda)(1-\frac{z}{\lambda})}
=\frac{1}{(1-x)(1-y)(1-z)}-\underset{\geq}{\mathrm{\Omega}}\frac{1}{(1-x\lambda)(1-y\lambda)\underline{(1-\frac{z}{\lambda})}} \\
=&\frac{1}{(1-x)(1-y)(1-z)}-\underset{\geq}{\mathrm{\Omega}}\frac{\lambda}{(1-x\lambda)(1-y\lambda)\underline{(\lambda-z)}} \\
=&\frac{1}{(1-x)(1-y)(1-z)}-\underset{\geq}{\mathrm{\Omega}}\frac{z}{(1-xz)(1-yz)\underline{(\lambda-z)}} \\
=&\frac{1}{(1-x)(1-y)(1-z)}-\frac{z}{(1-xz)(1-yz)(1-z)} \\
=&\frac{1-xyz}{(1-x)(1-y)(1-xz)(1-zy)}.
\end{align*}
\end{exa}

\section{Fundamental Formulas}

\begin{prop}\label{omegaprop}
Let $A\neq 1$ be independent of $\lambda$. If $F(\lambda^{-1})$ is a rational function in $\lambda^{-1}$ that admits a formal power series expansion in $\lambda^{-1}$, then
\begin{align*}
\underset{\geq}{\mathrm{\Omega}}\ \frac{1}{1 - A\lambda} \cdot F(\lambda^{-1}) = \frac{1}{1 - A} \cdot F(A).
\end{align*}
\end{prop}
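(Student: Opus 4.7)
The plan is to prove this proposition by the most direct route: explicitly expand $\frac{1}{1-A\lambda}$ and $F(\lambda^{-1})$ as formal series in $\lambda$ and $\lambda^{-1}$ respectively, multiply them, and then apply the definition of $\mathrm{\Omega}_\geq$ from Definition \ref{dfn-natural}. Since $A$ is independent of $\lambda$, the factor $\frac{1}{1-A\lambda}$ expands contributively as $\sum_{n\geq 0}A^n\lambda^n$, and by hypothesis $F(\lambda^{-1})=\sum_{m\geq 0}c_m\lambda^{-m}$ is a power series in $\lambda^{-1}$.

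The product then becomes $\sum_{n,m\geq 0}c_m A^n\,\lambda^{n-m}$. Applying $\mathrm{\Omega}_\geq$ retains only those terms with $n-m\geq 0$, which yields
\[
\sum_{m\geq 0}c_m\sum_{n\geq m}A^n=\sum_{m\geq 0}c_m\cdot\frac{A^m}{1-A}=\frac{1}{1-A}\sum_{m\geq 0}c_m A^m=\frac{F(A)}{1-A}.
\]
The hypothesis $A\neq 1$ is exactly what legitimizes the geometric sum $\sum_{n\geq m}A^n=A^m/(1-A)$, and the final step interprets $F(A)$ as the formal evaluation of the rational function $F(\lambda^{-1})$ at $\lambda^{-1}=A$ via its defining power series.

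An alternative route uses the partial fraction decomposition framework of Theorems \ref{propEPFD} and \ref{charAS}. Setting $E(\lambda)=F(\lambda^{-1})/(1-A\lambda)$, the only denominator factor of $E$ that is contributing in the sense of \eqref{EP0} is $1-A\lambda$, since the factors inherited from $F(\lambda^{-1})$ are all dually contributing (which is precisely the content of $F(\lambda^{-1})$ being a power series in $\lambda^{-1}$). By Theorem \ref{charAS}, the corresponding numerator $A_1(\lambda)$ in the decomposition is characterized by $A_1(\lambda)\equiv F(\lambda^{-1}) \pmod{1-A\lambda}$; substituting $\lambda^{-1}=A$ immediately reduces this to the constant $A_1=F(A)$, and formula \eqref{EE222Omega} delivers the stated value.

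Neither approach involves a genuine obstacle. The only step that requires mild care is the bookkeeping of which denominator factors are contributing versus dually contributing under the iterated Laurent series ordering in $\overline{\mathbb{K}}$; the hypothesis that $F(\lambda^{-1})$ admits a formal power series expansion in $\lambda^{-1}$ settles this point unambiguously, and the condition $A\neq 1$ ensures the resulting expression $\frac{F(A)}{1-A}$ is meaningful.
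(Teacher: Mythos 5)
Your proposal is correct and matches the paper's own proof, which likewise gives both arguments: a direct expansion (the paper does it for a single monomial $\lambda^{-m}$ and invokes linearity, which is your double-sum computation in disguise) and the contributing-factor/partial-fraction argument. No substantive differences.
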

\begin{proof}
Method 1: For any positive integer $m$, by Definition \ref{dfn-natural} and the geometric series expansion, we have
\begin{align*}
\underset{\geq}{\mathrm{\Omega}}\ \frac{1}{1-A\lambda}\cdot \lambda^{-m}
=\underset{\geq}{\mathrm{\Omega}}\sum_{k\geq 0}\lambda^kA^k\cdot \lambda^{-m}
=\underset{\geq}{\mathrm{\Omega}}\sum_{k\geq 0}\lambda^{k-m}A^k=\sum_{k\geq m}A^k=\frac{A^m}{1-A}.
\end{align*}
The result follows from the linearity of the $\mathrm{\Omega}_{\geq}$ operator.

Method 2: Using the underlined notation for the contributing factor, we obtain
\begin{align*}
\underset{\geq}{\mathrm{\Omega}}\ \frac{1}{1-A\lambda}\cdot F(\lambda ^{-1})=\underset{\geq}{\mathrm{\Omega}}\ \underline{\frac{1}{1-A\lambda}}\cdot F(\lambda ^{-1})=\frac{1}{1-A}\cdot F(A),
\end{align*}
which completes the proof.
\end{proof}

\begin{prop}\label{omegauLamb}
Let $A \neq 1$ be independent of $\lambda$. If $F(\lambda)$ is a rational function in $\lambda$ that admits a formal power series expansion in $\lambda$, then
\begin{align*}
\underset{\geq}{\mathrm{\Omega}}\ \frac{1}{1 - \frac{A}{\lambda}} \cdot F(\lambda) = \frac{F(1) - A \cdot F(A)}{1 - A}.
\end{align*}
\end{prop}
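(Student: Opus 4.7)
The plan is to adapt the simplified second approach of Example \ref{Exam112}: since the denominator factor $1 - A/\lambda$ is dually contributing (it contains a negative power of $\lambda$), I would invoke the dual form \eqref{DCE} from Theorem \ref{LemE}, which reduces the problem to $E(1)$ minus a single contribution coming from the underlined dual factor.

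First, setting $E(\lambda) = F(\lambda)/(1 - A/\lambda) = \lambda F(\lambda)/(\lambda - A)$, I would check the hypotheses of \eqref{DCE}: the value $E(0) = 0$ exists because the factor $\lambda$ in the numerator cancels the apparent pole of $1/(1-A/\lambda)$ at $\lambda = 0$, and $E(1) = F(1)/(1-A)$ exists since $A \neq 1$. Applying \eqref{DCE} then yields
\[
\underset{\geq}{\mathrm{\Omega}}\, E(\lambda) \;=\; \frac{F(1)}{1-A} \;-\; \underset{\geq}{\mathrm{\Omega}}\, \frac{F(\lambda)}{\underline{1 - A/\lambda}}.
\]

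To evaluate the underlined term, I would multiply numerator and denominator by $\lambda$, exactly as in the simplified second computation of Example \ref{Exam112}, to rewrite it as $\underset{\geq}{\mathrm{\Omega}}\, \lambda F(\lambda)/\underline{(\lambda - A)}$. Theorem \ref{charAS} then says the relevant partial-fraction polynomial has $\lambda$-degree less than $1$, so it is simply the value of $\lambda F(\lambda)$ modulo $\langle \lambda - A\rangle$, i.e.\ the result of substituting $\lambda = A$, giving $A F(A)$. Substituting $\lambda = 1$ into the remaining underlined factor $\lambda - A$ produces $1 - A$ in the denominator, so this contribution equals $AF(A)/(1-A)$. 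Assembling,
\[
\underset{\geq}{\mathrm{\Omega}}\, \frac{F(\lambda)}{1 - A/\lambda} \;=\; \frac{F(1)}{1-A} - \frac{AF(A)}{1-A} \;=\; \frac{F(1) - AF(A)}{1-A}.
\]

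There is no serious obstacle; the only point requiring care is verifying the $E(0)$ hypothesis so that the dual form \eqref{DCE} is legally applicable, which is immediate from the fact that $F(\lambda)$ is a power series at $\lambda = 0$ and $A \neq 0$. As a cross-check, one can instead expand $1/(1-A/\lambda) = \sum_{n\ge 0}(A/\lambda)^n$ and $F(\lambda) = \sum_{m\ge 0} f_m\lambda^m$, apply $\underset{\geq}{\mathrm{\Omega}}$ term by term (retaining only the terms with $m \ge n$), and collapse the resulting double sum using $\sum_{n=0}^m A^n = (1-A^{m+1})/(1-A)$ to recover $(F(1) - AF(A))/(1-A)$, in parallel with Method~1 of Proposition \ref{omegaprop}.
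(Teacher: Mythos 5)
Your proposal is correct and follows essentially the same route as the paper's own proof: both apply the dual form \eqref{DCE}, rewrite $1-\frac{A}{\lambda}$ as $\frac{\lambda-A}{\lambda}$, reduce the numerator modulo $\langle \lambda-A\rangle$ via Theorem \ref{charAS} to obtain $A F(A)$, and set $\lambda=1$ to get the contribution $\frac{AF(A)}{1-A}$. Your explicit verification that $E(0)$ exists and the series cross-check are welcome extra care but do not change the argument.
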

\begin{proof}
We proceed as follows:
\begin{align*}
\underset{\geq}{\mathrm{\Omega}}\ \frac{1}{1-\frac{A}{\lambda}}\cdot F(\lambda)
&=\frac{1}{1-A}\cdot F(1)-\underset{\geq}{\mathrm{\Omega}}\ \underline{\frac{1}{1-\frac{A}{\lambda}}}\cdot F(\lambda)
=\frac{1}{1-A}\cdot F(1)-\underset{\geq}{\mathrm{\Omega}}\ \underline{\frac{\lambda}{\lambda-A}}\cdot F(\lambda)
\\&=\frac{1}{1-A}\cdot F(1)-\underset{\geq}{\mathrm{\Omega}}\ \underline{\frac{A}{\lambda-A}}\cdot F(A)
=\frac{1}{1-A}\cdot F(1)-\frac{A}{1-A}\cdot F(A).
\end{align*}
Here, the second step rewrites the denominator, and the third step applies the operator to the underlined factor. The final expression yields the desired result.
\end{proof}

The following nine fundamental formulas are presented in the book \cite[Pages 102--103]{MacMahon19} without proof. Using our method, these identities can be readily established.
\begin{prop}{\em \cite[Pages 102--103]{MacMahon19} and \cite[Section 2]{Andrews3}}
Let $s$ be a positive integer. The fundamental evaluations of the Omega operator are given by:
\begin{align}
\underset{\geq}{\mathrm{\Omega}}\ \frac{1}{(1-\lambda x)(1-\frac{y}{\lambda^s})}&=\frac{1}{(1-x)(1-x^sy)},\label{fundamental-1}
\\ \underset{\geq}{\mathrm{\Omega}}\ \frac{1}{(1-\lambda^sx)(1-\frac{y}{\lambda})}&=\frac{1+xy\frac{1-y^{s-1}}{1-y}}{(1-x)(1-y^sx)},\label{fundamental-2}
\\ \underset{\geq}{\mathrm{\Omega}}\ \frac{1}{(1-\lambda x)(1-\frac{y}{\lambda})(1-\frac{z}{\lambda})}&=\frac{1}{(1-x)(1-xy)(1-xz)},\label{fundamental-3}
\\ \underset{\geq}{\mathrm{\Omega}}\ \frac{1}{(1-\lambda x)(1-\lambda y)(1-\frac{z}{\lambda})}&=\frac{1-xyz}{(1-x)(1-y)(1-xz)(1-zy)},\label{fundamental-4}
\\ \underset{\geq}{\mathrm{\Omega}}\ \frac{1}{(1-\lambda x)(1-\lambda y)(1-\frac{z}{\lambda^2})}&=\frac{1+xyz-x^2yz-xy^2z}{(1-x)(1-y)(1-zx^2)(1-zy^2)},\label{fundamental-5}
\\ \underset{\geq}{\mathrm{\Omega}}\ \frac{1}{(1-\lambda^2 x)(1-\frac{y}{\lambda})(1-\frac{z}{\lambda})}&=\frac{1+xy+xz+xyz}{(1-x)(1-xy^2)(1-xz^2)},\label{fundamental-6}
\\ \underset{\geq}{\mathrm{\Omega}}\ \frac{1}{(1-\lambda^2 x)(1-\lambda y)(1-\frac{z}{\lambda})}&=\frac{1+xz-xyz-xyz^2}{(1-x)(1-y)(1-yz)(1-xz^2)},\label{fundamental-7}
\\ \underset{\geq}{\mathrm{\Omega}}\ \frac{1}{(1-\lambda x)(1-\lambda y)(1-\lambda z)(1-\frac{w}{\lambda})}&=\frac{1-xyw-xzw-yzw+xyzw+xyzw^2}{(1-x)(1-y)(1-z)(1-wx)(1-wy)(1-wz)},\label{fundamental-8}
\\ \underset{\geq}{\mathrm{\Omega}}\ \frac{1}{(1-\lambda x)(1-\lambda y)(1-\frac{z}{\lambda})(1-\frac{w}{\lambda})}&=\frac{1-xyz-xyw-xyzw+xy^2zw+x^2yzw}{(1-x)(1-y)(1-xz)(1-xw)(1-yz)(1-yw)}.\label{fundamental-9}
\end{align}
\end{prop}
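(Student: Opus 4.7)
The plan is to apply the general decomposition \eqref{EP0} from Theorem~\ref{propEPFD} (and, where convenient, its dual \eqref{DCE} from Theorem~\ref{LemE}) to each of the nine rational functions. In every case $E(\lambda)$ is proper in $\lambda$, so the polynomial part $P(\lambda)$ in \eqref{EE} vanishes and $\mathrm{\Omega}_{\geq}E(\lambda)=\sum_{u_i\lambda^{a_i}<1}A_i(1)/(1-u_i)$, with each $A_i(\lambda)$ characterized by Theorem~\ref{charAS}. In the working field $\overline{\mathbb{K}}$, with the Latin letters $x,y,z,w$ smaller than $\lambda$, the contributing factors are exactly those of the form $1-\lambda^a u$ with $a>0$ and $u$ a monomial in the Latin letters; any factor $1-v/\lambda^b$, when brought into the proper form of \eqref{NEE} as a $\lambda$-monomial prefactor times $1-\lambda^b/v$, produces $u_i=1/v$ large and is therefore dually contributing.

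For each identity I would first list the contributing factors and then compute the corresponding $A_i(1)$. Several of the identities follow immediately from the two propositions just proved. Identities \eqref{fundamental-1} and \eqref{fundamental-3} fall to Proposition~\ref{omegaprop} with $A=x$ and $F(\mu)=1/(1-y\mu^s)$, resp.\ $F(\mu)=1/((1-y\mu)(1-z\mu))$. Identities \eqref{fundamental-2}, \eqref{fundamental-4}, and \eqref{fundamental-8} fall to Proposition~\ref{omegauLamb} applied to the unique dually contributing $1/\lambda$-factor, with $F(\lambda)$ equal to the remaining factor (which in each case is rational in $\lambda$ with a power series expansion at $\lambda=0$); the right-hand side is then recovered after a short algebraic simplification (for \eqref{fundamental-2} the identity $(1-y^s x)-y(1-x)=(1-y)(1+xy(1-y^{s-1})/(1-y))$, for \eqref{fundamental-4} the factorization $(1-xz)(1-yz)-z(1-x)(1-y)=(1-z)(1-xyz)$, and similarly for \eqref{fundamental-8}). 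Identities \eqref{fundamental-5} and \eqref{fundamental-9} do not quite fit either proposition (in \eqref{fundamental-5} the dually contributing factor is $1-z/\lambda^2$ rather than $1-z/\lambda$, and \eqref{fundamental-9} has two $1/\lambda$-type factors), but all of their contributing factors have $a_i=1$, so each $A_i$ is the constant obtained by substituting $\lambda\mapsto 1/u_i$ into the remaining product; adding the two contributions over the common denominator as in the first approach of Example~\ref{Exam112}, with the cancellation of an $(x-y)$ factor from the numerators $x(1-y)(\cdots)-y(1-x)(\cdots)$, yields the stated right-hand side.

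The main obstacle is the two identities \eqref{fundamental-6} and \eqref{fundamental-7}, where the contributing factor $1-\lambda^2 x$ has $a=2$, so $A(\lambda)$ is a genuine polynomial $a_0+a_1\lambda$ rather than a constant. My plan here is to write $A(\lambda)=a_0+a_1\lambda$ and impose the defining congruence of Theorem~\ref{charAS} modulo $1-\lambda^2 x$, using the relations $\lambda^2\equiv 1/x$ and $1/\lambda\equiv\lambda x$ to reduce $E(\lambda)(1-\lambda^2 x)$ to a polynomial of degree at most $1$ in $\lambda$; matching the constant and $\lambda$-coefficients then yields a $2\times 2$ linear system for $(a_0,a_1)$. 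In \eqref{fundamental-6} the determinant of this system simplifies, via the polynomial identity $(1+xyz)^2-x(y+z)^2=(1-xy^2)(1-xz^2)$, to $(1-xy^2)(1-xz^2)$, and one obtains $A(1)=(1+xy+xz+xyz)/((1-xy^2)(1-xz^2))$; dividing by $1-x$ gives the claimed answer. The same plan, combined with the analogous identity $x(1+yz)^2-(y+xz)^2=(x-y^2)(1-xz^2)$ and the routine $a=1$ contribution of the factor $(1-\lambda y)$, handles \eqref{fundamental-7}. The final step in every formula — bringing the partial contributions over a common denominator — is purely mechanical, so the real technical work of the proof lies in this $a\geq 2$ modular reduction.
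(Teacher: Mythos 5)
Your proposal is correct, and for seven of the nine identities it coincides with the paper's proof: \eqref{fundamental-1} and \eqref{fundamental-3} via Proposition~\ref{omegaprop}, \eqref{fundamental-2} via Proposition~\ref{omegauLamb}, \eqref{fundamental-4} as in Example~\ref{Exam112}, \eqref{fundamental-5} and \eqref{fundamental-9} by summing the two $a_i=1$ contributions, and \eqref{fundamental-8} via the dual/single-large-factor computation (your use of Proposition~\ref{omegauLamb} there is the same calculation the paper performs by the ``second approach'' of Example~\ref{Exam112}). The divergence is in \eqref{fundamental-6} and \eqref{fundamental-7}. For \eqref{fundamental-6} you solve for $A(\lambda)=a_0+a_1\lambda$ from the congruence of Theorem~\ref{charAS} as a $2\times2$ linear system; the determinant identity $(1+xyz)^2-x(y+z)^2=(1-xy^2)(1-xz^2)$ you invoke is correct, and this works. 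The paper instead computes $A(\lambda)$ in one line by multiplying numerator and denominator by $(1+\tfrac{y}{\lambda})(1+\tfrac{z}{\lambda})$, so that the denominator factors become $1-\tfrac{y^2}{\lambda^2}\equiv 1-xy^2$ and $1-\tfrac{z^2}{\lambda^2}\equiv 1-xz^2$ modulo $\langle 1-\lambda^2x\rangle$; this ``conjugate'' trick avoids any linear algebra and is worth knowing. For \eqref{fundamental-7} you again go through the $a=2$ modular reduction for the factor $1-\lambda^2x$ plus the routine contribution of $1-\lambda y$; I checked that your identity $x(1+yz)^2-(y+xz)^2=(x-y^2)(1-xz^2)$ and the resulting sum do reproduce the stated right-hand side. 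But this is the hard direction: the paper evaluates \eqref{fundamental-7} by the dual formula \eqref{DCE}, where the only (dually) contributing factor is $1-\tfrac{z}{\lambda}$ with exponent $1$, so no degree-$2$ computation is needed at all. In short, your argument is complete and correct, but choosing the side of the contribution/dual-contribution dichotomy with the fewer and lower-degree factors --- as the paper does for \eqref{fundamental-7} and \eqref{fundamental-8} --- is the intended labor-saving device of Theorem~\ref{LemE}.
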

\begin{proof}
Equations \eqref{fundamental-1} and \eqref{fundamental-3} are direct consequences of Proposition \ref{omegaprop}. Equation \eqref{fundamental-2} follows from Proposition \ref{omegauLamb}. The identity \eqref{fundamental-4} is verified in Example \ref{Exam112}.
The proofs of \eqref{fundamental-5} and \eqref{fundamental-9} are analogous to the first approach in Example \ref{Exam112}.
Similarly, \eqref{fundamental-7} and \eqref{fundamental-8} can be derived using the second method described in Example \ref{Exam112}.

For \eqref{fundamental-6}, we proceed as follows:
\begin{align*}
&\underset{\geq}{\mathrm{\Omega}}\ \frac{1}{(1-\lambda^2 x)(1-\frac{y}{\lambda})(1-\frac{z}{\lambda})}
=\underset{\geq}{\mathrm{\Omega}}\ \frac{1}{\underline{(1-\lambda^2 x)}(1-\frac{y}{\lambda})(1-\frac{z}{\lambda})}
=\underset{\geq}{\mathrm{\Omega}}\ \frac{(1+\frac{y}{\lambda})(1+\frac{z}{\lambda})}{\underline{(1-\lambda^2 x)}(1-\frac{y^2}{\lambda^2})(1-\frac{z^2}{\lambda^2})}
\\=& \underset{\geq}{\mathrm{\Omega}}\ \frac{1+\frac{y}{\lambda}\cdot \lambda^2x+\frac{z}{\lambda}\cdot \lambda^2x+\frac{yz}{\lambda^2}\cdot \lambda^2x}{\underline{(1-\lambda^2 x)}(1-xy^2)(1-xz^2)}
=\frac{1+xy+xz+xyz}{(1-x)(1-xy^2)(1-xz^2)}.
\end{align*}
This completes the proof.
\end{proof}

As noted in \cite{Andrews5}:
\emph{``The proofs of many these rules are quite elementary but in case of several $\lambda$ variables, elimination can be much more cumbersome."}
We now demonstrate through the following example that our method remains straightforward even in the presence of multiple $\lambda$ variables.

\begin{prop}{\em \cite[Lemma 2.1]{Andrews11}}
The following identity holds:
\begin{align*}
&\underset{\geq}{\mathrm{\Omega}}\ \frac{1-AB\lambda_1\lambda_2}{(1-A\lambda_1)(1-B\lambda_2)(1-C\lambda_1\lambda_2)(1-D\lambda_1\lambda_2)(1-\frac{E}{\lambda_1\lambda_2})}
\\=&\frac{(1-AB)(1-CDE)}{(1-A)(1-B)(1-C)(1-D)(1-CE)(1-DE)}.
\end{align*}
\end{prop}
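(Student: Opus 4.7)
The plan is to split the numerator so that the fundamental formulas \eqref{fundamental-4} and \eqref{fundamental-8} dispose of the inner $\lambda_1$-evaluation in closed form. Using the identity $1-AB\lambda_1\lambda_2=(1-B\lambda_2)+B\lambda_2(1-A\lambda_1)$ and dividing by $(1-A\lambda_1)(1-B\lambda_2)$ gives
\[
\frac{1-AB\lambda_1\lambda_2}{(1-A\lambda_1)(1-B\lambda_2)}=\frac{1}{1-A\lambda_1}+\frac{B\lambda_2}{1-B\lambda_2},
\]
so the left-hand side of the target identity splits as $\underset{\geq}{\mathrm{\Omega}}\,T_1+\underset{\geq}{\mathrm{\Omega}}\,T_2$, where
\[
T_1=\frac{1}{(1-A\lambda_1)R},\qquad T_2=\frac{B\lambda_2}{(1-B\lambda_2)R},\qquad R=(1-C\lambda_1\lambda_2)(1-D\lambda_1\lambda_2)(1-E/(\lambda_1\lambda_2)).
\]

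Step 1 (apply $\underset{\geq}{\mathrm{\Omega}}$ in $\lambda_1$). The denominator of $T_1$ has exactly the four $\lambda_1$-factors required by \eqref{fundamental-8}, taken with $(x,y,z,w)=(A,C\lambda_2,D\lambda_2,E/\lambda_2)$; the formula produces a rational function in $\lambda_2$ with numerator $N(\lambda_2):=1-ACE-ADE-CDE\lambda_2+ACDE\lambda_2+ACDE^2$ and denominator $(1-A)(1-C\lambda_2)(1-D\lambda_2)(1-AE/\lambda_2)(1-CE)(1-DE)$. For $T_2$, the prefactor $B\lambda_2/(1-B\lambda_2)$ is inert in $\lambda_1$, and the remaining three $\lambda_1$-factors match \eqref{fundamental-4} with $(x,y,z)=(C\lambda_2,D\lambda_2,E/\lambda_2)$, producing $\frac{B\lambda_2(1-CDE\lambda_2)}{(1-B\lambda_2)(1-C\lambda_2)(1-D\lambda_2)(1-CE)(1-DE)}$.

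Step 2 (apply $\underset{\geq}{\mathrm{\Omega}}$ in $\lambda_2$). In Step 1's image of $T_2$ all three $\lambda_2$-factors $(1-B\lambda_2),(1-C\lambda_2),(1-D\lambda_2)$ are small, and Theorem \ref{LemE} gives three residue-type contributions at $\lambda_2\in\{1/B,1/C,1/D\}$. In Step 1's image of $T_1$ the only large $\lambda_2$-factor is $(1-AE/\lambda_2)$, so the dual formula \eqref{DCE} reduces the evaluation to the substitution $\lambda_2=1$ minus a single contribution handled as in the second approach of Example \ref{Exam112}: multiply by $\lambda_2$, substitute $\lambda_2=AE$ in the non-underlined factors, and set $\lambda_2=1$ in the cleared factor.

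The main obstacle is the algebraic collapse in Step 3: the raw intermediate sum carries extraneous denominators $(1-AE),(B-C),(B-D),(C-D)$ and the five-term numerator $N(\lambda_2)$, none of which appear in the final product $\frac{(1-AB)(1-CDE)}{(1-A)(1-B)(1-C)(1-D)(1-CE)(1-DE)}$. The key identity $N(\lambda_2)=(1-ADE)(1-CE)+CE(1-A)(1-D\lambda_2)$ (and its $C\leftrightarrow D$ counterpart) splits the $T_1$ output into two simpler pieces whose $\lambda_2$-evaluations pair naturally with the $T_2$-residues, so that the spurious factors cancel. The $(A,\lambda_1)\leftrightarrow(B,\lambda_2)$-symmetry of the original expression serves as a useful consistency check.
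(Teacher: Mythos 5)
Your route is valid but genuinely different from the paper's, and markedly heavier. You split the numerator via $1-AB\lambda_1\lambda_2=(1-B\lambda_2)+B\lambda_2(1-A\lambda_1)$ (which checks out) and then feed the two pieces to the prepackaged formulas \eqref{fundamental-8} and \eqref{fundamental-4} for the $\lambda_1$-elimination; this is legitimate, since with $(y,z,w)=(C\lambda_2,D\lambda_2,E/\lambda_2)$ the small/large classification of the $\lambda_1$-factors is unchanged, and your numerator $N(\lambda_2)$ and your key identity $N(\lambda_2)=(1-ADE)(1-CE)+CE(1-A)(1-D\lambda_2)$ are both correct. The cost is that summing over all three contributing $\lambda_1$-factors in \eqref{fundamental-8}, and then over all three contributing $\lambda_2$-factors in the image of $T_2$, produces five terms carrying spurious denominators $(1-AE)$, $(1-ACE)$, $(1-ADE)$, $(B-C)$, $(B-D)$, $(C-D)$ that must all cancel. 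The paper avoids this entirely by always working from the side with a \emph{single} factor: it eliminates $\lambda_1$ by the dual formula \eqref{DCE} (the only large $\lambda_1$-factor being $1-\frac{E}{\lambda_1\lambda_2}$), which yields just two terms, and then each of those admits exactly one underlined factor in $\lambda_2$ (dual for the first, direct for the second), giving three clean summands with no extraneous denominators. Your approach would succeed, and the symmetry check is a good idea, but be aware that your Step 3 is only a sketch: you assert that the two pieces of the split of $N(\lambda_2)$ "pair naturally" with the $T_2$-residues without exhibiting the pairing, so as written the proof is not complete until that finite rational-function verification is actually carried out. The lesson relative to the paper is that choosing, at each elimination, whichever of \eqref{EP0} or \eqref{DCE} has fewer terms is what keeps the computation short.
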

\begin{proof}
The operator $\mathrm{\Omega}_{\geq}$ acts to eliminate the variables $\lambda_1$ and $\lambda_2$.
We proceed by first eliminating $\lambda_{1}$, followed by $\lambda_{2}$.
By accounting for the dual contribution in $\lambda_1$, we derive
\begin{footnotesize}
\begin{align*}
&\underset{\geq}{\mathrm{\Omega}}\ \frac{1-AB\lambda_1\lambda_2}{(1-A\lambda_1)(1-B\lambda_2)(1-C\lambda_1\lambda_2)(1-D\lambda_1\lambda_2)
(1-\frac{E}{\lambda_1\lambda_2})}
\\=&\underset{\geq}{\mathrm{\Omega}}\ \frac{1-AB\lambda_2}{(1-A)(1-B\lambda_2)(1-C\lambda_2)(1-D\lambda_2)(1-\frac{E}{\lambda_2})}
-\underset{\geq}{\mathrm{\Omega}}\ \frac{1-AB\lambda_1\lambda_2}{(1-A\lambda_1)(1-B\lambda_2)(1-C\lambda_1\lambda_2)(1-D\lambda_1\lambda_2)
\underline{(1-\frac{E}{\lambda_1\lambda_2})}}
\\=&\underset{\geq}{\mathrm{\Omega}}\ \frac{1-AB\lambda_2}{(1-A)(1-B\lambda_2)(1-C\lambda_2)(1-D\lambda_2)(1-\frac{E}{\lambda_2})}
-\underset{\geq}{\mathrm{\Omega}}\ \frac{(1-ABE)\cdot\frac{E}{\lambda_2}}{(1-\frac{AE}{\lambda_2})(1-B\lambda_2)(1-CE)(1-DE)(1-\frac{E}{\lambda_2})}.
\end{align*}
\end{footnotesize}
For the first term above, we consider the dual contribution in $\lambda_2$, while for the second term, we account for the contribution in $\lambda_2$. This yields
\begin{footnotesize}
\begin{align*}
&\underset{\geq}{\mathrm{\Omega}}\ \frac{1-AB\lambda_1\lambda_2}{(1-A\lambda_1)(1-B\lambda_2)(1-C\lambda_1\lambda_2)(1-D\lambda_1\lambda_2)
(1-\frac{E}{\lambda_1\lambda_2})}
\\=&\frac{1-AB}{(1-A)(1-B)(1-C)(1-D)(1-E)}-\underset{\geq}{\mathrm{\Omega}}\ \frac{1-AB\lambda_2}{(1-A)(1-B\lambda_2)(1-C\lambda_2)(1-D\lambda_2)\underline{(1-\frac{E}{\lambda_2})}}
\\&\ \ \ \ \ \ \ \ \ \ -\underset{\geq}{\mathrm{\Omega}}\ \frac{(1-ABE)\cdot\frac{E}{\lambda_2}}{(1-\frac{AE}{\lambda_2})\underline{(1-B\lambda_2)}(1-CE)(1-DE)(1-\frac{E}{\lambda_2})}
\\=&\frac{1-AB}{(1-A)(1-B)(1-C)(1-D)(1-E)}-\frac{(1-ABE)E}{(1-A)(1-BE)(1-CE)(1-DE)(1-E)}
\\&\ \ \ \ \ \ \ \ \ \  -\frac{(1-ABE)BE}{(1-ABE)(1-B)(1-CE)(1-DE)(1-BE)}
\\=&\frac{(1-AB)(1-CDE)}{(1-A)(1-B)(1-C)(1-D)(1-CE)(1-DE)}.
\end{align*}
\end{footnotesize}
This concludes the proof.
\end{proof}

We have also computed numerous examples from the works of Andrews and his collaborators.
For instance: \cite[Equation (12)]{Andrews5}, \cite[Equation (3.1)]{Andrews1}, \cite[Equations (3) and (5)]{Andrews9}, \cite[Lemmas 1 and 2]{Andrews9}, \cite[Proposition 1]{Andrews10}, \cite[Lemma 4.1]{Andrews14}, \cite[Lemmas 2.2, 2.3, 2.4, and 2.5]{Andrews15}, among others. Our approach readily handles these computations.

\section{A Computational Framework}

The following result first appeared in \cite[Page 183]{MacMahon19}, and has been frequently utilized in the work of Andrews et al. \cite{Andrews8, Andrews10, Andrews11, Andrews12}.
\begin{prop}{\em \cite[Page 183]{MacMahon19}}\label{MacMahon19-Page183}
The following identity holds:
\begin{align*}
&\underset{\geq}{\mathrm{\Omega}}\ \frac{1}{(1-x_1\lambda_1\lambda_2)(1-\frac{x_2\lambda_3}{\lambda_1})
(1-\frac{x_4}{\lambda_3\lambda_4})(1-\frac{x_3\lambda_4}{\lambda_2})}
\\=&\frac{1-x_1^2x_2x_3}{(1-x_1)(1-x_1x_2)(1-x_1x_3)(1-x_1x_2x_3)(1-x_1x_2x_3x_4)}.
\end{align*}
\end{prop}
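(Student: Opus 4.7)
The plan is to eliminate the four $\lambda$-variables one at a time, at each step pulling out the factors independent of the current $\lambda$ and invoking one of the fundamental formulas from the previous section. A convenient elimination order is $\lambda_4,\lambda_3,\lambda_1,\lambda_2$, because $\lambda_3$ and $\lambda_4$ each appear in only two denominator factors and can therefore be removed cheaply by formula \eqref{fundamental-1} before any longer identity is needed.

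First I apply $\underset{\geq}{\mathrm{\Omega}}$ in $\lambda_4$ to the pair $(1-\frac{x_3\lambda_4}{\lambda_2})(1-\frac{x_4}{\lambda_3\lambda_4})$; formula \eqref{fundamental-1} with $s=1$, $x=x_3/\lambda_2$, $y=x_4/\lambda_3$ replaces it by $\frac{1}{(1-x_3/\lambda_2)(1-x_3x_4/(\lambda_2\lambda_3))}$. Next I eliminate $\lambda_3$ from the pair $(1-\frac{x_2\lambda_3}{\lambda_1})(1-\frac{x_3x_4}{\lambda_2\lambda_3})$ by the same formula, which yields $\frac{1}{(1-x_2/\lambda_1)(1-x_2x_3x_4/(\lambda_1\lambda_2))}$. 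I then apply $\underset{\geq}{\mathrm{\Omega}}$ in $\lambda_1$ to the three factors $(1-x_1\lambda_1\lambda_2)$, $(1-x_2/\lambda_1)$, $(1-x_2x_3x_4/(\lambda_1\lambda_2))$ using \eqref{fundamental-3} with $x=x_1\lambda_2$, $y=x_2$, $z=x_2x_3x_4/\lambda_2$, obtaining $\frac{1}{(1-x_1\lambda_2)(1-x_1x_2\lambda_2)(1-x_1x_2x_3x_4)}$.

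For the final step only $\lambda_2$ remains. The factor $(1-x_1x_2x_3x_4)$ is $\lambda_2$-free and can be pulled outside; together with the carry-over factor $(1-x_3/\lambda_2)$ from the very first step, the $\lambda_2$-dependent piece is $\frac{1}{(1-x_1\lambda_2)(1-x_1x_2\lambda_2)(1-x_3/\lambda_2)}$, exactly matching \eqref{fundamental-4} with $x=x_1$, $y=x_1x_2$, $z=x_3$. This produces $\frac{1-x_1^2x_2x_3}{(1-x_1)(1-x_1x_2)(1-x_1x_3)(1-x_1x_2x_3)}$, and multiplying by $\frac{1}{1-x_1x_2x_3x_4}$ delivers exactly the claimed right-hand side. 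The main subtlety, more than a genuine obstacle, is at this last step: one has to remember that the $(1-x_3/\lambda_2)$ factor created when $\lambda_4$ was eliminated is still dormant, and that reuniting it with the two $\lambda_2$-factors produced during the $\lambda_1$-step is precisely what makes \eqref{fundamental-4} applicable. A less careful elimination order, for instance attacking $\lambda_2$ before $\lambda_1$, forces a match against the heavier formula \eqref{fundamental-9} or a direct appeal to the partial-fraction framework of Theorems \ref{propEPFD} and \ref{LemE}.
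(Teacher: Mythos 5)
Your proof is correct and follows essentially the same route as the paper: the same sequential single-variable eliminations with identical first two steps, the only difference being that you eliminate $\lambda_1$ before $\lambda_2$ and delegate the last two steps to the pre-established formulas \eqref{fundamental-3} and \eqref{fundamental-4} rather than summing the two contributing factors by hand. One small correction to your closing aside: the paper's order ($\lambda_2$ before $\lambda_1$) also lands on an expression of the shape \eqref{fundamental-4} in $\lambda_1$ (two factors with positive powers, one with a negative power), so it does not force an appeal to \eqref{fundamental-9}.
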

\begin{proof}
The operator $\mathrm{\Omega}_{\geq}$ is applied to eliminate the variables $\lambda_1, \lambda_2, \lambda_3, \lambda_4$. We proceed by eliminating them in the order $\lambda_4, \lambda_3, \lambda_2, \lambda_1$ successively.
This yields the following sequence of expressions:
\begin{align*}
&\underset{\geq}{\mathrm{\Omega}}\ \frac{1}{(1-x_1\lambda_1\lambda_2)(1-\frac{x_2\lambda_3}{\lambda_1})
(1-\frac{x_4}{\lambda_3\lambda_4})\underline{(1-\frac{x_3\lambda_4}{\lambda_2})}}
=\underset{\geq}{\mathrm{\Omega}}\ \frac{1}{(1-x_1\lambda_1\lambda_2)\underline{(1-\frac{x_2\lambda_3}{\lambda_1})}
(1-\frac{x_3x_4}{\lambda_2\lambda_3})(1-\frac{x_3}{\lambda_2})}
\\=&\underset{\geq}{\mathrm{\Omega}}\ \frac{1}{\underline{(1-x_1\lambda_1\lambda_2)}(1-\frac{x_2}{\lambda_1})
(1-\frac{x_2x_3x_4}{\lambda_1\lambda_2})(1-\frac{x_3}{\lambda_2})}
=\underset{\geq}{\mathrm{\Omega}}\ \frac{1}{\underline{(1-x_1\lambda_1)}(1-\frac{x_2}{\lambda_1})
(1-x_1x_2x_3x_4)\underline{(1-x_1x_3\lambda_1)}}
\\=&\frac{1}{(1-x_1)(1-x_1x_2)(1-x_3)(1-x_1x_2x_3x_4)}+\frac{1}{(1-x_3^{-1})(1-x_1x_2x_3)(1-x_1x_3)(1-x_1x_2x_3x_4)}
\\=&\frac{1-x_1^2x_2x_3}{(1-x_1)(1-x_1x_2)(1-x_1x_3)(1-x_1x_2x_3)(1-x_1x_2x_3x_4)}.
\end{align*}
This completes the proof.
\end{proof}

For the reader's convenience, we have implemented several computational steps in \texttt{Maple} \cite{Maple}, which is freely available at \url{https://github.com/TygerLiu/TygerLiu.github.io/tree/main/Procedure/Omega-Fundamental}.
The program features five main commands: \texttt{R2List}, \texttt{L2Rational}, \texttt{ClassifyNCDI}, \texttt{ContributeDispel}, and \texttt{DualContributeDispel}.
To illustrate their usage, we now apply these commands to Proposition \ref{MacMahon19-Page183}.

\textbf{Step 1}: We begin by using the command \texttt{ClassifyNCDI} to generate a table.
This table displays the ``contribution terms" and ``dual contribution terms" for each variable $\lambda_i$.
The column ``C-Num" indicates the number of contribution terms, while ``DC-Num" indicates the number of dual contribution terms.
\begin{flalign*}
&>~\text{temp}:=\frac{1}{(1-x_1\lambda_1\lambda_2)(1-\frac{x_2\lambda_3}{\lambda_1})
(1-\frac{x_3\lambda_4}{\lambda_2})(1-\frac{x_4}{\lambda_3\lambda_4})}:\\
&>~\texttt{ClassifyNCDI}(\text{temp});\\
&>~\begin{array}{|c|c|c|c|c|c|}\hline
  & \text{Numerator} & \text{Contributing} & \text{C-Num} & \text{Dully contributing} & \text{DC-Num} \\ \hline
\text{lambda[1]} & 1 & 1-x_1\lambda_1\lambda_2 & 1 & 1-\frac{x_2\lambda_3}{\lambda_1} & 1 \\ \hline
\text{lambda[2]} & 1 & 1-x_1\lambda_1\lambda_2 & 1 & 1-\frac{x_3\lambda_4}{\lambda_2} & 1 \\ \hline
\text{lambda[3]} & 1 & 1-\frac{x_2\lambda_3}{\lambda_1} & 1 & 1-\frac{x_4}{\lambda_3\lambda_4} & 1 \\ \hline
\text{lambda[4]} & 1 & 1-\frac{x_3\lambda_4}{\lambda_2} & 1 & 1-\frac{x_4}{\lambda_3\lambda_4} & 1 \\ \hline
\end{array}&
\end{flalign*}

\textbf{Step 2}: Next, we apply the command \texttt{R2List} to convert the rational function ``temp" into an equivalent list representation.
The resulting list ``LL" provides a form of ``temp" that facilitates computation in \texttt{Maple}.
The first element of the list corresponds to the numerator of the rational function.
\begin{flalign*}
&>~\texttt{R2List}(\text{temp},[x_1,x_2,x_3,x_4,\lambda_1,\lambda_2,\lambda_3,\lambda_4]);\\
&>~\left[-\frac{1}{x_1x_2x_3\lambda_3\lambda_4},\frac{1}{x_1\lambda_1\lambda_2},\frac{\lambda_1}{x_2\lambda_3},
\frac{\lambda_2}{x_3\lambda_4},\frac{x_4}{\lambda_3\lambda_4}\right]\\
&>~\text{LL}:=\texttt{Lflip}(\%,(M)\rightarrow \text{not}\ \  \texttt{IsSmall}(M,[x_1,x_2,x_3,x_4,\lambda_1,\lambda_2,\lambda_3,\lambda_4]));\\
&>~\text{LL}:=\left[1,x_1\lambda_1\lambda_2,\frac{x_2\lambda_3}{\lambda_1},\frac{x_3\lambda_4}{\lambda_2},
\frac{x_4}{\lambda_3\lambda_4}\right]&
\end{flalign*}

\textbf{Step 3}: Based on the table above, we first eliminate the variable $\lambda_4$, as it appears in only one contribution term.
This is achieved using the command \texttt{ContributeDispel}, which corresponds to the underlined operation.
For example, $\texttt{ContributeDispel}(LL,\lambda_4,3)$ specifies that the $(3+1)$-th term of $\lambda_4$ in ``LL" is the contribution term.
We then iteratively apply \texttt{ContributeDispel} to remove the remaining variables.
The command \texttt{L2Rational} converts the list back into a rational function.
\begin{flalign*}
&>~\text{T1}:=\texttt{ContributeDispel}(\text{LL},\lambda_4,3);\\
&>~T1:=\left[1,-\frac{x_3x_4}{\lambda_2\lambda_3},x_1\lambda_1\lambda_2,\frac{x_2\lambda_3}{\lambda_1},
\frac{x_3}{\lambda_2}\right]\\
&>~\text{T2}:=\texttt{ContributeDispel}(\text{T1},\lambda_3,3);\\
&>~T2:=\left[1,-\frac{x_2x_3x_4}{\lambda_1\lambda_2},x_1\lambda_1\lambda_2,\frac{x_3}{\lambda_2},
\frac{x_2}{\lambda_1}\right]\\
&>~\text{T3}:=\texttt{ContributeDispel}(\text{T2},\lambda_2,2); \text{F3}:=\texttt{L2Rational}(T3);\\
&>~T3:=\left[1, x_1x_2x_3x_4, x_1x_3\lambda_1, \frac{x_2}{\lambda_1}, x_1\lambda_1\right]\ F3:=\frac{\lambda_1}{(1-x_1x_2x_3x_4)(1-x_1x_3\lambda_1)(\lambda_1-x_2)(1-x_1\lambda_1)}\\
&>~\text{T41}:=\texttt{ContributeDispel}(\text{T3},\lambda_1,2); \text{F41}:=\texttt{L2Rational}(T41);\\
&>~T41:=\left[1, x_1x_2x_3, \frac{1}{x_3}, x_1x_2x_3x_4, x_1x_3\right]\ F41:=\frac{x_3}{(1-x_1x_2x_3)(1-x_1x_3)(1-x_3)(1-x_1x_2x_3x_4)}\\
&>~\text{T42}:=\texttt{ContributeDispel}(\text{T3},\lambda_1,4); \text{F42}:=\texttt{L2Rational}(T42);\\
&>~T42:=\left[1, x_3, x_1x_2, x_1x_2x_3x_4, x_1\right]\ F42:=\frac{1}{(1-x_3)(1-x_1x_2)(1-x_1)(1-x_1x_2x_3x_4)}\\
&>~\text{F}:=\texttt{normal}(F41+F42);\\
&>~F:=\frac{1-x_1^2x_2x_3}{(1-x_1)(1-x_1x_2)(1-x_1x_3)(1-x_1x_2x_3x_4)(1-x_1x_2x_3)}
\end{flalign*}

\textbf{Step 4}: When we consider the dual contribution, we employ the command \texttt{DualContributeDispel}.
\begin{flalign*}
&>~\text{T5}:=\texttt{DualContributeDispel}(\text{T3},\lambda_1,3); \text{F5}:=\texttt{L2Rational}(T5);\\
&>~T5:=\left[x_2, x_1x_2x_3, x_1x_2, x_1x_2x_3x_4, x_2\right]\ F5:=\frac{x_2}{(1-x_1x_2x_3)(1-x_1x_2)(1-x_2)(1-x_1x_2x_3x_4)}\\
&>~\text{F}:=\texttt{normal}(\texttt{subs}(\lambda_1=1,F3)-F5);\\
&>~F:=\frac{1-x_1^2x_2x_3}{(1-x_1)(1-x_1x_2)(1-x_1x_3)(1-x_1x_2x_3x_4)(1-x_1x_2x_3)}
\end{flalign*}

\section{Applications}

\subsection{Han's Formula}

Han \cite{han} developed a general algorithm for computing the MacMahon $\Omega_{\geq}$ operator and implemented it in the \texttt{Maple} package \texttt{GenOmega}. Equation \eqref{Han-Formula} below is a key result of this algorithm, and its proof occupies a significant portion \cite[Theorem 1.4]{han}. We now need only one step to prove this formula.

Let
$$A(t)=\prod_{i=1}^n(1-x_it),\quad B(t)=\prod_{i=1}^m(1-y_it)$$
be two polynomials.
\begin{thm}{\em \cite[Theorem 1.4]{han}}
Let $U(\lambda)$ be a Laurent polynomial with $x_i\neq 1$ and $x_iy_j\neq 1$ for all $i,j$. If $\deg U(\lambda)\leq n-1$, then
\begin{align}\label{Han-Formula}
\underset{\geq}{\mathrm{\Omega}}\ \frac{U(\lambda)}{A(\lambda)B(1/\lambda)}=\sum_{i=1}^n\frac{x_i^{n-1}U(1/x_i)}{(1-x_i)B(x_i)\prod_{j\neq i}(x_i-x_j)}.
\end{align}
The formula holds even when the $y_i$'s are not all distinct.
\end{thm}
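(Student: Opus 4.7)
The plan is to apply the proper-case identity \eqref{CE} of Theorem \ref{LemE} directly to
\[
E(\lambda) := \frac{U(\lambda)}{A(\lambda)\,B(1/\lambda)},
\]
viewing its denominator as providing $n$ contributing factors $1-x_i\lambda$ and $m$ dually contributing factors $1-y_j/\lambda$. First, I would verify the properness hypothesis: clearing the negative powers of $\lambda$ by writing $B(1/\lambda) = \lambda^{-m}\prod_{j=1}^m(\lambda - y_j)$ gives
\[
E(\lambda) = \frac{\lambda^m\, U(\lambda)}{A(\lambda)\prod_{j=1}^m(\lambda - y_j)},
\]
whose numerator has $\lambda$-degree at most $n+m-1$ (using $\deg U \le n-1$), while the denominator has degree $n+m$; hence $E(\lambda)$ is proper in $\lambda$.

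Next I would invoke \eqref{CE}. In $\overline{\mathbb{K}}$ each $1-x_i\lambda$ is small (contributing) and each $1-y_j/\lambda$ is large (dually contributing), so
\[
\underset{\geq}{\mathrm{\Omega}}\, E(\lambda) \;=\; \sum_{i=1}^n \mathcal{T}_{1-x_i\lambda}\, E(\lambda) \;=\; \sum_{i=1}^n \frac{A_i(1)}{1-x_i}.
\]
Because each $a_i = 1$, the constraint $\deg_\lambda A_i < 1$ in Theorem \ref{charAS} forces $A_i$ to be a constant, obtainable by substituting $\lambda = 1/x_i$ into $E(\lambda)(1-x_i\lambda)$. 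The factor $1-x_i\lambda$ cancels the corresponding factor of $A(\lambda)$, yielding
\[
A_i \;=\; \frac{U(1/x_i)}{\prod_{j\neq i}(1 - x_j/x_i)\cdot B(x_i)} \;=\; \frac{x_i^{n-1}\, U(1/x_i)}{B(x_i)\prod_{j\neq i}(x_i - x_j)},
\]
and dividing by $1-x_i$ reproduces the $i$-th summand of \eqref{Han-Formula}. For the extension to the case when the $y_j$'s are not all distinct, I would observe that both sides are rational functions of the $y_j$'s and remain finite under the hypothesis $x_i y_j \neq 1$, so a specialization/continuity argument from the generic distinct case closes this last clause.

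The main obstacle, really only bookkeeping, is the degree check that ensures $E(\lambda)$ is proper so that the simplified formula \eqref{CE} applies (otherwise we would have to track a polynomial part $P(1)$ through \eqref{EP0}). Once this is in hand, the framework of Section 2 collapses the entire argument of \cite[Theorem 1.4]{han} into a single residue-style evaluation at $\lambda = 1/x_i$, illustrating exactly the kind of simplification our method is designed to deliver.
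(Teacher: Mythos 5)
Your proposal is correct and follows essentially the same route as the paper: the paper's two-line proof underlines all $n$ contributing factors $(1-x_i\lambda)$ and reads off $\sum_i U(1/x_i)/\bigl((1-x_i)B(x_i)\prod_{j\neq i}(1-x_j/x_i)\bigr)$, which is exactly your evaluation of each $A_i$ at $\lambda=1/x_i$ via Theorem \ref{charAS} followed by \eqref{CE}. You merely make explicit the properness check and the clearing of denominators $\prod_{j\neq i}(1-x_j/x_i)=x_i^{-(n-1)}\prod_{j\neq i}(x_i-x_j)$, plus a (harmless, and in this framework essentially unnecessary) continuity argument for non-distinct $y_j$'s, all of which the paper leaves implicit.
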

\begin{proof}
We compute as follows:
\begin{align*}
\underset{\geq}{\mathrm{\Omega}}\ \frac{U(\lambda)}{A(\lambda)B(1/\lambda)}&=\underset{\geq}{\mathrm{\Omega}}\ \frac{U(\lambda)}{\underline{(1-x_1\lambda)(1-x_2\lambda)\cdots (1-x_n\lambda)}B(1/\lambda)}
\\&=\sum_{i=1}^n \frac{U(1/x_i)}{(1-x_i)B(x_i)\prod_{j\neq i}(1-x_j/x_i)}.
\end{align*}
This completes the proof.
\end{proof}

\subsection{$k$-gon Partitions}

For simplicity, we adopt the notation from \cite{Andrews9}. Let $\mathbb{P}$ denote the set of all positive integers.
The set of \emph{non-degenerate $k$-gon partitions} into positive parts is defined by
\[
\tau_k = \left\{ (a_1, a_2, \ldots, a_k) \in \mathbb{P}^k \mid 1 \leq a_1 \leq a_2 \leq \cdots \leq a_k,\ a_1 + \cdots + a_{k-1} > a_k \right\}.
\]
The set of \emph{non-degenerate $k$-gon partitions of a positive integer $n$} is given by
\[
\tau_k(n) = \left\{ (a_1, a_2, \ldots, a_k) \in \tau_k \mid a_1 + a_2 + \cdots + a_k = n \right\},
\]
and its cardinality is denoted by
\[
t_k(n) = |\tau_k(n)|.
\]
For any integer $k \geq 3$, we define the generating functions
\[
T_k(q) = \sum_{n \geq k} t_k(n) q^n,
\]
and
\[
S_k(x_1, \ldots, x_k) = \sum_{(a_1, \ldots, a_k) \in \tau_k} x_1^{a_1} x_2^{a_2} \cdots x_k^{a_k}.
\]

In \cite{Andrews3}, Andrews, Paule, and Riese derived explicit expressions for $T_3(q)$, $T_4(q)$, $T_5(q)$, and $T_6(q)$ using the \texttt{Omega} package. They also posed the following open problem \cite[Problem 4.2]{Andrews3}: Is it possible to identify a unified pattern for all $k$? This question was later resolved in \cite{Andrews9}.
In what follows, we present a simplified proof of this result.

\begin{thm}{\em \cite[Theorem 1]{Andrews9}}
Let $k\geq 3$. Then
\begin{small}
\begin{align*}
S_k(x_1,&\ldots,x_k)=\frac{x_1x_2\cdots x_k}{(1-x_1x_2\cdots x_k)(1-x_2\cdots x_k)\cdots(1-x_k)}
\\&-\frac{x_1x_2\cdots x_{k-1}x_k^{k-1}}{(1-x_1\cdots x_{k-1}x_k^{k-1})(1-x_2\cdots x_{k-1}x_k^{k-2})\cdots (1-x_{k-2}x_{k-1}x_k^2)(1-x_{k-1}x_k)(1-x_k)}.
\end{align*}
\end{small}
\end{thm}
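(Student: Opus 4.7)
The plan is to decompose $S_k$ as $S_k = \tilde S_k - U_k$, where
\[
\tilde S_k(x_1, \ldots, x_k) = \sum_{1 \leq a_1 \leq a_2 \leq \cdots \leq a_k} x_1^{a_1} x_2^{a_2} \cdots x_k^{a_k}
\]
is the generating function for all weakly increasing sequences of positive integers, and
\[
U_k(x_1, \ldots, x_k) = \sum_{\substack{1 \leq a_1 \leq \cdots \leq a_k \\ a_1 + a_2 + \cdots + a_{k-1} \leq a_k}} x_1^{a_1} x_2^{a_2} \cdots x_k^{a_k}
\]
collects the degenerate configurations. Combinatorially $S_k = \tilde S_k - U_k$ is immediate, and within the $\mathrm{\Omega}_\geq$ framework of this paper the same split is what one obtains by introducing a variable $\mu$ tracking the strict inequality $a_1 + \cdots + a_{k-1} - a_k \geq 1$ and applying the dual contribution formula \eqref{DCE} in $\mu$: the $E(1)$ term produces $\tilde S_k$ and the dual part produces $U_k$.

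For $\tilde S_k$, I would apply the change of variable $c_1 = a_1 \geq 1$ and $c_i = a_i - a_{i-1} \geq 0$ for $i \geq 2$, so that $a_i = c_1 + c_2 + \cdots + c_i$ and $\prod_i x_i^{a_i} = \prod_j (x_j x_{j+1} \cdots x_k)^{c_j}$. Summing the resulting independent geometric series yields
\[
\tilde S_k = \frac{x_1 x_2 \cdots x_k}{(1 - x_1 x_2 \cdots x_k)(1 - x_2 \cdots x_k) \cdots (1 - x_k)},
\]
which is exactly the first summand of the theorem.

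For $U_k$, the key observation is that when $a_1 + \cdots + a_{k-1} \leq a_k$, the ordering constraint $a_{k-1} \leq a_k$ is automatic, so I would make the substitution $a_k = a_k' + a_1 + a_2 + \cdots + a_{k-1}$ with $a_k' \geq 0$. This decouples $a_k$ from the ordering and factors the sum as
\[
U_k = \frac{1}{1 - x_k} \cdot \tilde S_{k-1}(x_1 x_k,\, x_2 x_k, \ldots, x_{k-1} x_k).
\]
Plugging in the formula for $\tilde S_{k-1}$ from the previous step and using $(x_j x_k)(x_{j+1} x_k) \cdots (x_{k-1} x_k) = x_j x_{j+1} \cdots x_{k-1} x_k^{k-j}$ reduces the right-hand side exactly to the second summand of the theorem.

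The main obstacle is purely bookkeeping: verifying that the denominator factors produced by $\tilde S_{k-1}(x_1 x_k, \ldots, x_{k-1} x_k)/(1-x_k)$ line up with the indexed pattern $(1 - x_j x_{j+1} \cdots x_{k-1} x_k^{k-j})$ for $j = 1, \ldots, k-1$, together with the factor $(1 - x_k)$, in the theorem. No delicate analytic or algebraic cancellation arises, since each of $\tilde S_k$ and $U_k$ is computed by a direct summation of multivariable geometric series, and subtracting them requires no further simplification.
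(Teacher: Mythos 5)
Your proof is correct, and it takes a genuinely different route from the paper. The paper stays entirely inside the $\Omega_\geq$ machinery: it encodes all $k$ constraints with crude generating variables $\lambda_1,\dots,\lambda_k$, eliminates $\lambda_{k-1},\lambda_{k-2},\dots,\lambda_1$ by successive single-factor contributions (Theorem \ref{charAS}), and only at the very last step applies the dual contribution formula \eqref{DCE} in $\lambda_k$, whose $E(1)$ term and dual term are precisely your $\tilde S_k$ and $U_k$. You instead perform the split $S_k=\tilde S_k-U_k$ combinatorially at the outset and evaluate each piece by elementary telescoping substitutions: $c_i=a_i-a_{i-1}$ for $\tilde S_k$, and the decoupling $a_k=a_k'+a_1+\cdots+a_{k-1}$ for $U_k$ (which is legitimate because $a_1+\cdots+a_{k-1}\geq a_{k-1}$ makes the ordering constraint on $a_k$ redundant, reducing $U_k$ to $\tilde S_{k-1}(x_1x_k,\dots,x_{k-1}x_k)/(1-x_k)$); the denominator bookkeeping you flag does check out against the stated indexed pattern. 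What each approach buys: yours is shorter and self-contained for this particular theorem, essentially bypassing partition analysis; the paper's version is deliberately routed through the $\Omega_\geq$ elimination steps because the theorem serves as a showcase for the general computational framework, and the same mechanical procedure applies to constraint systems where no such clean combinatorial decoupling is visible.
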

\begin{proof}
By the definition of $\mathrm{\Omega}_\geq$ and the geometric series summation, we have
\begin{align*}
S_k(x_1,\ldots,x_k)&=\underset{\geq}{\mathrm{\Omega}}\ \sum_{a_1\geq 1,\ a_2,\ldots,a_k\geq 0}x_1^{a_1}\cdots x_k^{a_k}\lambda_1^{a_2-a_1}\cdots \lambda_{k-1}^{a_k-a_{k-1}}\lambda_{k}^{a_1+\cdots+a_{k-1}-a_k-1}
\\&=\underset{\geq}{\mathrm{\Omega}}\ \frac{x_1\lambda_1^{-1}}{(1-\frac{x_1\lambda_k}{\lambda_1})(1-\frac{x_2\lambda_1\lambda_k}{\lambda_2})
(1-\frac{x_3\lambda_2\lambda_k}{\lambda_3})\cdots(1-\frac{x_{k-1}\lambda_{k-2}\lambda_k}{\lambda_{k-1}})
(1-\frac{x_k\lambda_{k-1}}{\lambda_k})}.
\end{align*}
The $\mathrm{\Omega}_{\geq}$ operator eliminates the variables $\lambda_1,\lambda_2,\ldots,\lambda_k$. We proceed by first eliminating $\lambda_{k-1},\lambda_{k-2},\ldots,\lambda_{1}$ in succession, and then $\lambda_k$.
The computation unfolds as follows:
\begin{align*}
&S_k(x_1,\ldots,x_k)=\underset{\geq}{\mathrm{\Omega}}\ \frac{x_1\lambda_1^{-1}}{(1-\frac{x_1\lambda_k}{\lambda_1})(1-\frac{x_2\lambda_1\lambda_k}{\lambda_2})
(1-\frac{x_3\lambda_2\lambda_k}{\lambda_3})\cdots(1-\frac{x_{k-1}\lambda_{k-2}\lambda_k}{\lambda_{k-1}})
\underline{(1-\frac{x_k\lambda_{k-1}}{\lambda_k})}}
\\=&\underset{\geq}{\mathrm{\Omega}}\ \frac{x_1\lambda_1^{-1}}{(1-\frac{x_1\lambda_k}{\lambda_1})(1-\frac{x_2\lambda_1\lambda_k}{\lambda_2})
\cdots(1-\frac{x_{k-2}\lambda_{k-3}\lambda_k}{\lambda_{k-2}})\underline{(1-x_{k-1}x_k\lambda_{k-2})}
(1-\frac{x_k}{\lambda_k})}
\\=&\underset{\geq}{\mathrm{\Omega}}\ \frac{x_1\lambda_1^{-1}}{(1-\frac{x_1\lambda_k}{\lambda_1})(1-\frac{x_2\lambda_1\lambda_k}{\lambda_2})
\cdots(1-\frac{x_{k-3}\lambda_{k-4}\lambda_k}{\lambda_{k-3}})\underline{(1-x_{k-2}x_{k-1}x_k\lambda_{k-3}\lambda_k)}(1-x_{k-1}x_k)
(1-\frac{x_k}{\lambda_k})}
\\=&\underset{\geq}{\mathrm{\Omega}}\ \frac{x_1\lambda_1^{-1}}{(1-\frac{x_1\lambda_k}{\lambda_1})(1-\frac{x_2\lambda_1\lambda_k}{\lambda_2})
\cdots\underline{(1-x_{k-3}\cdots x_k\lambda_{k-4}\lambda_k^2)}(1-x_{k-2}x_{k-1}x_k\lambda_k)(1-x_{k-1}x_k)
(1-\frac{x_k}{\lambda_k})}
\\=&\cdots
\\=&\underset{\geq}{\mathrm{\Omega}}\ \frac{x_1\lambda_1^{-1}}{(1-\frac{x_1\lambda_k}{\lambda_1})\underline{(1-x_2\cdots x_k\lambda_1\lambda_k^{k-3})}(1-x_3\cdots x_k\lambda_k^{k-4})\cdots (1-x_{k-2}x_{k-1}x_k\lambda_k)(1-x_{k-1}x_k)(1-\frac{x_k}{\lambda_k})}
\\=&\underset{\geq}{\mathrm{\Omega}}\ \frac{x_1x_2\cdots x_k\lambda_k^{k-3}}{(1-x_1\cdots x_k\lambda_k^{k-2})(1-x_2\cdots x_k\lambda_k^{k-3})\cdots (1-x_{k-2}x_{k-1}x_k\lambda_k)(1-x_{k-1}x_k)(1-\frac{x_k}{\lambda_k})}.
\end{align*}
Now, by considering the dual contribution, we obtain
\begin{align*}
S_k(x_1,&\ldots,x_k)=\frac{x_1x_2\cdots x_k}{(1-x_1x_2\cdots x_k)(1-x_2\cdots x_k)\cdots(1-x_k)}
\\&-\underset{\geq}{\mathrm{\Omega}}\ \frac{x_1x_2\cdots x_k\lambda_k^{k-3}}{(1-x_1\cdots x_k\lambda_k^{k-2})(1-x_2\cdots x_k\lambda_k^{k-3})\cdots (1-x_{k-2}x_{k-1}x_k\lambda_k)(1-x_{k-1}x_k)\underline{(1-\frac{x_k}{\lambda_k})}}.
\end{align*}
An application of \eqref{DCE} completes the proof.
\end{proof}

Let $x_i=q$ for all $i$. We obtain the following result.
\begin{cor}{\em \cite[Corollary 1]{Andrews9}}
Let $k\geq 3$. Then
\begin{align*}
T_k(q)=\frac{q^k}{(1-q)(1-q^2)\cdots (1-q^k)}-\frac{q^{2k-2}}{1-q}\cdot \frac{1}{(1-q^2)(1-q^4)\cdots (1-q^{2k-2})}.
\end{align*}
\end{cor}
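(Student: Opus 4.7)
The plan is to derive the corollary as an immediate specialization of the preceding theorem by setting $x_1 = x_2 = \cdots = x_k = q$. First I would point out that, directly from the definitions,
\[
S_k(q,q,\ldots,q) = \sum_{(a_1,\ldots,a_k)\in\tau_k} q^{a_1+a_2+\cdots+a_k} = \sum_{n\geq k} t_k(n)\, q^n = T_k(q),
\]
so it suffices to perform the substitution $x_i = q$ in the closed form for $S_k(x_1,\ldots,x_k)$ established in the theorem.

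Next I would simplify the two terms separately. In the first term, the numerator becomes $q^k$, and the denominator is the product of factors $(1 - x_j x_{j+1}\cdots x_k)$ for $j = 1,\ldots, k$, which under $x_i = q$ turns into $(1 - q^{k-j+1})$. As $j$ ranges over $1,\ldots,k$ this produces exactly $(1-q)(1-q^2)\cdots(1-q^k)$, yielding the first summand of the claimed formula.

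For the second term, the numerator $x_1 x_2 \cdots x_{k-1} x_k^{k-1}$ collapses to $q^{k-1}\cdot q^{k-1} = q^{2k-2}$. The factors of the denominator, written in the theorem as $(1 - x_j x_{j+1} \cdots x_{k-1} x_k^{k-j})$ for $j = 1,\ldots, k-2$, together with $(1 - x_{k-1}x_k)$ and $(1 - x_k)$, become respectively $(1 - q^{(k-j) + (k-j)}) = (1 - q^{2(k-j)})$, $(1-q^2)$, and $(1-q)$. Letting $j$ range through $1,\ldots,k-2$ gives exponents $2k-2, 2k-4, \ldots, 4$, so the full denominator factorizes as $(1-q)(1-q^2)(1-q^4)\cdots(1-q^{2k-2})$, matching the second summand.

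There is no real obstacle here; the argument is a bookkeeping exercise in reading off the exponents of $q$ from the two products. Combining the two simplifications yields exactly
\[
T_k(q) = \frac{q^k}{(1-q)(1-q^2)\cdots(1-q^k)} - \frac{q^{2k-2}}{1-q}\cdot\frac{1}{(1-q^2)(1-q^4)\cdots(1-q^{2k-2})},
\]
which completes the proof.
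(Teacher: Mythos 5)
Your proposal is correct and matches the paper exactly: the paper derives the corollary by the single remark ``Let $x_i=q$ for all $i$'' applied to the preceding theorem, and your substitution and exponent bookkeeping carry that out faithfully.
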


\subsection{Two Dimensional Problem}

Inspired by the work of \cite{Klosinski}, Andrews, Paule, and Riese investigated the generating function for a two-dimensional problem:
\[
S_{K,L}(x,y)=\sum_{\substack{m,n\geq 0;\\ Km\geq n;\\ Ln\geq m}} x^m y^n,
\]
where $K,L\geq 2$. We now illustrate the application of our method to this problem.

\begin{thm}{\em \cite[Section 2]{Andrews7}}
Let $K,L\geq 2$. Then
\begin{align*}
S_{K,L}(x,y)=\frac{1+xy\frac{(1-x^L)(1-y^K)}{(1-x)(1-y)}-xy^K-x^Ly}{(1-xy^K)(1-x^Ly)}.
\end{align*}
\end{thm}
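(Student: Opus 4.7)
My plan is to introduce two omega variables $\lambda_1, \lambda_2$ to enforce the inequalities $Km - n \geq 0$ and $Ln - m \geq 0$, writing
$$S_{K,L}(x,y) = \underset{\geq}{\mathrm{\Omega}}\,\sum_{m,n \geq 0} x^m y^n \lambda_1^{Km-n}\lambda_2^{Ln-m} = \underset{\geq}{\mathrm{\Omega}}\,\frac{1}{(1 - x\lambda_1^K/\lambda_2)(1 - y\lambda_2^L/\lambda_1)},$$
and then eliminate $\lambda_1$ followed by $\lambda_2$ using the results developed in the previous sections.

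First, I observe that the expression to be reduced in $\lambda_1$ matches exactly the pattern of the fundamental formula \eqref{fundamental-2} with $s = K$, under the substitutions $x \mapsto x/\lambda_2$ and $y \mapsto y\lambda_2^L$. Applying \eqref{fundamental-2} directly yields
$$H(\lambda_2) := \underset{\geq}{\mathrm{\Omega}}\,\frac{1}{(1 - x\lambda_1^K/\lambda_2)(1 - y\lambda_2^L/\lambda_1)} = \frac{1 + xy\lambda_2^{L-1}\frac{1 - y^{K-1}\lambda_2^{L(K-1)}}{1 - y\lambda_2^L}}{(1 - x/\lambda_2)(1 - xy^K\lambda_2^{KL-1})},$$
with $\lambda_1$ now eliminated. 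Clearing the inner fraction places $H$ in standard form with numerator $N(\lambda_2) := 1 - y\lambda_2^L + xy\lambda_2^{L-1} - xy^K\lambda_2^{KL-1}$ and an extra $(1 - y\lambda_2^L)$ factor in the denominator.

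Next, I would eliminate $\lambda_2$ using the dual form \eqref{DCE} of Theorem~\ref{LemE}. Among the three denominator factors, only $(1 - x/\lambda_2)$ has a negative $\lambda_2$-exponent and is therefore dually contributing, so
$$S_{K,L}(x,y) = H(1) - \underset{\geq}{\mathrm{\Omega}}\,\frac{N(\lambda_2)}{(1 - y\lambda_2^L)\underline{(1 - x/\lambda_2)}(1 - xy^K\lambda_2^{KL-1})}.$$
Setting $\lambda_2 = 1$ and using $(1 - y^{K-1})/(1-y) = 1 + y + \cdots + y^{K-2}$ collapses $H(1)$ to $\frac{1 - y + xy - xy^K}{(1-y)(1-x)(1-xy^K)}$. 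For the dual contribution, I would rewrite $(1 - x/\lambda_2) = -x\lambda_2^{-1}(1 - \lambda_2/x)$ to obtain a proper-form factor and apply Theorem~\ref{charAS}, evaluating the remainder at $\lambda_2 = x$. The decisive cancellation is
$$N(x) = 1 - yx^L + xy\cdot x^{L-1} - xy^K\cdot x^{KL-1} = 1 - x^{KL}y^K,$$
where the middle two terms annihilate each other. The resulting factor $1 - x^{KL}y^K$ cancels against the denominator factor $(1 - xy^K \cdot x^{KL-1})$ evaluated at $\lambda_2 = x$, leaving the clean value $\frac{x}{(1-x)(1 - x^Ly)}$ for the dual contribution.

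Combining the two parts gives
$$S_{K,L}(x,y) = \frac{1 - y + xy - xy^K}{(1-y)(1-x)(1-xy^K)} - \frac{x}{(1-x)(1 - x^L y)},$$
and placing this over the common denominator $(1-x)(1-y)(1-xy^K)(1-x^Ly)$ followed by routine polynomial expansion matches the claimed numerator $1 + xy(1-x^L)(1-y^K)/[(1-x)(1-y)] - xy^K - x^Ly$. The main obstacle will be the bookkeeping through Step~1: carrying the auxiliary fraction $(1 - y^{K-1}\lambda_2^{L(K-1)})/(1 - y\lambda_2^L)$ cleanly and correctly handling the sign and $\lambda_2$-factor that arise when converting $(1 - x/\lambda_2)$ to proper form. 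The structural highlight is the cancellation inside $N(x)$, which is what forces a potentially $O(K)$-term answer to collapse into the compact two-fraction form with denominator $(1-xy^K)(1-x^Ly)$.
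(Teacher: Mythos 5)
Your proof is correct and follows essentially the same route as the paper: the identical $\mathrm{\Omega}_\geq$ representation $\underset{\geq}{\mathrm{\Omega}}\,\big[(1-x\lambda_1^K/\lambda_2)(1-y\lambda_2^L/\lambda_1)\big]^{-1}$, elimination of $\lambda_1$ first and then of $\lambda_2$ via the dual contribution of the single large factor $1-x/\lambda_2$, with all intermediate values ($H(1)$, the dual contribution $\tfrac{x}{(1-x)(1-x^Ly)}$, and the final combination) checking out against the stated formula. The only difference is one of packaging: you invoke \eqref{fundamental-2} to dispose of $\lambda_1$ in a single step, keeping one rational function in $\lambda_2$ and exploiting the cancellation $N(x)=1-x^{KL}y^K$, whereas the paper performs the $\lambda_1$ dual contribution by hand and splits into two $\lambda_2$-problems early (one of which is evaluated by setting $\lambda_2=1$); the two orchestrations of Theorems \ref{charAS} and \ref{LemE} are equivalent.
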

\begin{proof}
By the definition of the $\mathrm{\Omega}_\geq$ operator and the geometric series summation, we have
\[
S_{K,L}(x,y) = \underset{\geq}{\mathrm{\Omega}} \frac{1}{(1 - \frac{x\lambda_1^K}{\lambda_2})(1 - \frac{y\lambda_2^L}{\lambda_1})}.
\]
To apply the $\mathrm{\Omega}_{\geq}$ operator, we eliminate $\lambda_1$ first, followed by $\lambda_2$.
Considering the dual contribution in $\lambda_1$, we obtain
\begin{align*}
S_{K,L}(x,y)&=\underset{\geq}{\mathrm{\Omega}}\ \frac{1}{(1-\frac{x}{\lambda_2})(1-y\lambda_2^L)}-\underset{\geq}{\mathrm{\Omega}}\ \frac{1}{(1-\frac{x\lambda_1^K}{\lambda_2})\underline{(1-\frac{y\lambda_2^L}{\lambda_1})}}
\\&=\underset{\geq}{\mathrm{\Omega}}\ \frac{1}{(1-\frac{x}{\lambda_2})(1-y\lambda_2^L)}-\underset{\geq}{\mathrm{\Omega}}\ \frac{y\lambda_2^{L}}{(1-xy^K\lambda_2^{LK-1})(1-y\lambda_2^L)}.
\end{align*}
The second term can be evaluated directly by setting $\lambda_2 = 1$. For the first term, we consider the dual contribution in $\lambda_2$, yielding
\begin{align*}
S_{K,L}(x,y)&=\frac{1}{(1-x)(1-y)}-\underset{\geq}{\mathrm{\Omega}}\ \frac{1}{\underline{(1-\frac{x}{\lambda_2})}(1-y\lambda_2^L)}-\frac{y}{(1-xy^K)(1-y)}
\\&=\frac{1}{(1-x)(1-y)}-\frac{x}{(1-x)(1-yx^L)}-\frac{y}{(1-xy^K)(1-y)}
\\&=\frac{1+xy\frac{(1-x^L)(1-y^K)}{(1-x)(1-y)}-xy^K-x^Ly}{(1-xy^K)(1-x^Ly)}.
\end{align*}
This completes the proof.
\end{proof}

Our method remains applicable to several other results, including \cite[Theorems 3.1, 4.1, and 5.1]{Andrews2}, \cite[Lemma 3.1]{Andrews13}, and \cite[Page 139 (No. 386--394)]{MacMahon19}, among others. For brevity, we do not enumerate them all here.

\subsection{A Hard Problem}

We begin by recalling the definition of the \emph{$q$-shifted factorial}. For a complex number $a$ and a positive integer $n$, it is defined as
\[
(a;q)_0 = 1, \quad (a;q)_n = (1 - a)(1 - aq) \cdots (1 - aq^{n-1}).
\]
In \cite{Li-Liu-Xin}, the computation of the following generating function is a difficult problem and has not been solved:
\[
F(x,y,q) = \sum_{\substack{i \geq b_0 \geq b_1 \geq \cdots \geq b_k \geq 0; \\ b_0 \geq j \geq c_1 \geq c_2 \geq \cdots \geq c_r \geq 0}} x^i y^j q^{b_0 + b_1 + \cdots + b_k + c_1 + \cdots + c_r}.
\]
In this subsection, we employ MacMahon's partition analysis to derive an explicit formula for $F(x,y,q)$.

\begin{thm}
With the notation as above, we have
\begin{align*}
F(x,y,q)=\frac{1}{(y;q)_{r+1}(x;q)_{k+2}}+\sum_{i=0}^{r} \frac{(-1)^{i+1}yq^{i(i+3)/2}}{(1-x)(1-yq^i)(q;q)_i(q;q)_{r-i}(xyq^{i+1};q)_{k+1}}.
\end{align*}
\end{thm}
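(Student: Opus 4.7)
The plan is to reduce $F(x,y,q)$ to a single-variable $\Omega_\geq$ problem in one auxiliary variable $\nu$, then apply the dual-contribution formula \eqref{DCE}. Introducing $\nu$ solely to encode the coupling $b_0 \geq j$ decouples the two chains $i \geq b_0 \geq \cdots \geq b_k$ (weighted by $x^i q^{b_0+\cdots+b_k}\nu^{b_0}$) and $j \geq c_1 \geq \cdots \geq c_r$ (weighted by $y^j q^{c_1+\cdots+c_r}\nu^{-j}$). Each chain sums geometrically via the standard difference substitution $d_l = a_l - a_{l+1}$, producing
\[
F(x,y,q) = \frac{1}{1-x}\,\underset{\geq}{\mathrm{\Omega}}\,\frac{1}{(xq\nu;q)_{k+1}(y/\nu;q)_{r+1}},
\]
a single-variable Omega problem in $\nu$. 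In the working iterated Laurent series, each factor $(1 - xq^{i+1}\nu)$ is contributing in $\nu$, while each factor $(1 - yq^j/\nu)$ is dually contributing.

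Next, apply \eqref{DCE} in $\nu$: $\Omega_\geq E(\nu) = E(1) - \sum_{j=0}^{r} T_j$. The term $E(1) = 1/[(xq;q)_{k+1}(y;q)_{r+1}]$, combined with the prefactor $1/(1-x)$, produces $1/[(x;q)_{k+2}(y;q)_{r+1}]$, the first summand of the target. For each $j$, the dual contribution $T_j$ is computed by Theorem \ref{charAS}: rewrite $(1 - yq^j/\nu) = -(yq^j/\nu)(1 - \nu/(yq^j))$, specialize $\nu = yq^j$ in $E(\nu)(1 - \nu/(yq^j))$ (the limit $(1-\nu/(yq^j))/(1-yq^j/\nu)|_{\nu = yq^j}$ gives a factor of $-1$), and divide by $1 - 1/(yq^j)$ to obtain
\[
T_j = \frac{yq^j}{(1-yq^j)(xyq^{j+1};q)_{k+1}\prod_{j'\neq j}(1-q^{j'-j})}.
\]

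Finally, split the cyclotomic product into $j' < j$ and $j' > j$, using $(1-q^{-a}) = -q^{-a}(1-q^a)$:
\[
\prod_{\substack{j'=0 \\ j' \neq j}}^{r}(1 - q^{j'-j}) = (-1)^{j}\,q^{-j(j+1)/2}\,(q;q)_j(q;q)_{r-j}.
\]
Substituting this back converts $-T_j/(1-x)$ into the desired $(-1)^{j+1}\, yq^{j(j+3)/2}/[(1-x)(1-yq^j)(q;q)_j(q;q)_{r-j}(xyq^{j+1};q)_{k+1}]$, giving the claimed formula. The main obstacle is the delicate sign tracking when rewriting the dually contributing factors in the standard form required by Theorem \ref{charAS}; once that is handled, everything reduces to routine partial-fraction bookkeeping.
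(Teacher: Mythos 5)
Your proof is correct, and it arrives at the same partial-fraction expansion as the paper by a genuinely different reduction. The paper places its single Omega variable on the inequality $b_0\geq b_1$, splitting the poset into the chain $i\geq b_0\geq j\geq c_1\geq\cdots\geq c_r$ and the chain $b_1\geq\cdots\geq b_k$, and it works with the more general five-variable function $G(x,y,t,z,q)$ carrying markers $t,z$ on $b_0,b_1$ that are only specialized to $q$ at the end; the elimination there is done with the direct contribution \eqref{CE} over the $r+2$ small factors $1-xt\lambda$, $1-xytq^i\lambda$. You instead place the variable on $b_0\geq j$, which cuts the poset into two chains whose weights $x^iq^{b_0+\cdots+b_k}\nu^{b_0}$ and $y^jq^{c_1+\cdots+c_r}\nu^{-j}$ separate cleanly, so no auxiliary markers are needed, and you evaluate by the dual contribution \eqref{DCE} over the $r+1$ large factors $1-yq^j/\nu$ plus the $E(1)$ term. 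Your bookkeeping checks out: writing $E(\nu)$ in the positive-exponent form required by \eqref{NEE} and applying Theorem \ref{charAS} gives $A_j=-1/\bigl[(xyq^{j+1};q)_{k+1}\prod_{j'\neq j}(1-q^{j'-j})\bigr]$, the denominator $1-1/(yq^j)$ contributes $-yq^j/(1-yq^j)$, and the identity $\prod_{j'\neq j}(1-q^{j'-j})=(-1)^jq^{-j(j+1)/2}(q;q)_j(q;q)_{r-j}$ turns $-T_j/(1-x)$ into exactly the stated summand (using $j+j(j+1)/2=j(j+3)/2$), while $E(1)/(1-x)$ gives the first term via $(1-x)(xq;q)_{k+1}=(x;q)_{k+2}$. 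Your route is arguably leaner — one Omega variable, no $t,z$ specialization step — whereas the paper's route yields the more general $G(x,y,t,z,q)$ as a byproduct; it is a nice check that the two cuts of the poset produce the same $r+2$-term answer.
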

\begin{proof}
To establish this result, we calculate the following generating function:
$$G(x,y,t,z,q)=\sum_{i\geq b_0\geq b_1\geq \cdots\geq b_k\geq 0; \atop b_0\geq j\geq c_1\geq c_2\geq \cdots\geq c_r\geq 0}x^i y^j t^{b_0} z^{b_1} q^{b_2+\cdots+b_k+c_1+\cdots +c_r}.$$
Note that $F(x,y,q) = G(x,y,q,q,q)$.
Let
\begin{align*}
G_1(x,y,t,q)=\sum_{i\geq b_0\geq j\geq c_1\geq c_2\geq \cdots\geq c_r\geq 0}x^i y^j t^{b_0} q^{c_1+\cdots +c_r}
\ \ \ \ \text{and}\ \ \ \
G_2(z,q)=\sum_{b_1\geq \cdots\geq b_k\geq 0}z^{b_1} q^{b_2+\cdots+b_k}.
\end{align*}
It is straightforward to verify that
\begin{align*}
G_1(x,y,t,q)&=\frac{1}{(1-x)(1-xt)(1-xyt)(1-xytq)\cdots (1-xytq^r)},
\\ G_2(z,q)&=\frac{1}{(1-z)(1-zq)\cdots(1-zq^{k-1})}.
\end{align*}
By Definition \ref{dfn-natural} of the operator $\mathrm{\Omega}_\geq$, the generating function $G(x,y,t,z,q)$ can be expressed as
$$G(x,y,t,z,q)=\underset{\geq}{\mathrm{\Omega}}\ G_1(x,y,t\lambda,q)\cdot G_2\left(\frac{z}{\lambda},q\right).$$
Applying the operator step by step, we eliminate the variable $\lambda$ and obtain:
\begin{small}
\begin{align*}
&G(x,y,t,z,q)
\\=&\underset{\geq}{\mathrm{\Omega}}\ \frac{1}{(1-x)\underline{(1-xt\lambda)(1-xyt\lambda)(1-xytq\lambda)\cdots (1-xytq^r\lambda)}(1-\frac{z}{\lambda})(1-\frac{zq}{\lambda})\cdots(1-\frac{zq^{k-1}}{\lambda})}
\\=&\frac{1}{(1-x)(1-xt)(1-y)(1-yq)\cdots (1-yq^r)(1-xzt)(1-xztq)\cdots (1-xztq^{k-1})}
\\ \ &+\frac{1}{(1-x)(1-\frac{1}{y})(1-xyt)(1-q)\cdots (1-q^r)(1-xyzt)(1-xyztq)\cdots (1-xyztq^{k-1})}
\\ \ &+\frac{1}{(1-x)(1-\frac{1}{yq})(1-\frac{1}{q})(1-xytq)(1-q)\cdots (1-q^{r-1})(1-xyztq)(1-xyztq^2)\cdots (1-xyztq^{k})}
\\ \ &+\frac{1}{(1-x)(1-\frac{1}{yq^2})(1-\frac{1}{q^2})(1-\frac{1}{q})(1-xytq^2)(1-q)\cdots (1-q^{r-2})(1-xyztq^2)\cdots (1-xyztq^{k+1})}
\\ \ &+\cdots
\\ \ &+\frac{1}{(1-x)(1-\frac{1}{yq^r})(1-\frac{1}{q^r})(1-\frac{1}{q^{r-1}})\cdots (1-\frac{1}{q})(1-xyztq^r)(1-xyztq^{r+1})\cdots (1-xyztq^{k+r-1})}
\\=& \frac{1}{(1-x)(1-xt)(y;q)_{r+1}(xzt;q)_k}
+\sum_{i=0}^r\frac{(-1)^{i+1}yq^{\frac{i(i+3)}{2}}}{(1-x)(1-yq^i)(1-xytq^{i})(q;q)_i(q;q)_{r-i}(xyztq^i;q)_k}.
\end{align*}
\end{small}
Substituting $t = q$, $z = q$, and recalling that $F(x,y,q) = G(x,y,q,q,q)$, we arrive at the desired expression. This completes the proof.
\end{proof}

\subsection{Two Generating Functions}

Let $\mathbb{H}$ be a ring, such as the ring $\mathbb{Z}$ of integers. Consider the bivariate generating function
\[
F(x,y)=\sum_{i\geq j\geq 0}a_{ij}x^iy^j, \quad a_{ij}\in \mathbb{H}.
\]
Define the generating functions $G_1(x,y,z)$ and $G_2(x,z)$ by
\begin{align*}
G_1(x,y,z):=\sum_{m\geq i\geq \ell\geq j\geq 0}a_{ij}x^m y^{\ell} z^{i+j},\ \ \ \ \ G_2(x,z):=\sum_{i\geq \ell\geq j\geq 0}a_{ij}x^{\ell} z^{i+j}.
\end{align*}
As highlighted in \cite{Li-Liu-Xin}, these functions are of particular interest. We now derive their expressions via MacMahon's partition analysis.

\begin{thm}
Following the above notation, we have
\begin{align*}
G_1(x,y,z)=\frac{F(xz,yz)-yF(xyz,z)}{(1-x)(1-y)},\quad \quad
G_2(x,z)=\frac{F(z,xz)-xF(xz,z)}{1-x}.
\end{align*}
\end{thm}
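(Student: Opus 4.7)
The plan is to realize both $G_1$ and $G_2$ as applications of $\mathrm{\Omega}_\geq$ to an expression in which $F$ appears as an abstract series evaluated at arguments built from auxiliary variables $\lambda_i$, and then to peel off the $\lambda_i$'s one at a time using Propositions~\ref{omegaprop} and~\ref{omegauLamb}. Those propositions are stated for rational $F$, but Method~1 of Proposition~\ref{omegaprop} expands $F$ term by term as $\sum c_m \lambda^{-m}$ and evaluates $\mathrm{\Omega}_\geq$ on each monomial separately; hence, by linearity, both propositions extend verbatim to any formal power series $F$ with coefficients in $\mathbb{H}$, which is what I need here.

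For $G_1$, I would introduce three auxiliary variables $\lambda_1,\lambda_2,\lambda_3$ encoding the chains $m\geq i$, $i\geq \ell$, $\ell\geq j$, and then sum the free geometric series in $m$ and $\ell$ to arrive at
\begin{align*}
G_1(x,y,z)=\underset{\geq}{\mathrm{\Omega}}\ \frac{F(z\lambda_2/\lambda_1,\ z/\lambda_3)}{(1-x\lambda_1)(1-y\lambda_3/\lambda_2)}.
\end{align*}
Since $F(z\lambda_2/\lambda_1,z/\lambda_3)$ depends on $\lambda_1$ only through $\lambda_1^{-1}$, Proposition~\ref{omegaprop} (with $A=x$) eliminates $\lambda_1$; the same proposition with $A=y/\lambda_2$ then eliminates $\lambda_3$, leaving
\begin{align*}
\underset{\geq}{\mathrm{\Omega}}\ \frac{F(xz\lambda_2,\ yz/\lambda_2)}{(1-x)(1-y/\lambda_2)}.
\end{align*}
The crucial observation is that $H(\lambda_2):=F(xz\lambda_2,yz/\lambda_2)=\sum_{i\geq j\geq 0}a_{ij}(xz)^i(yz)^j\lambda_2^{i-j}$ is a genuine power series in $\lambda_2$, thanks to the constraint $i\geq j$ hidden in $F$. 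Proposition~\ref{omegauLamb} with $A=y$ therefore applies directly and yields $(H(1)-yH(y))/(1-y)=(F(xz,yz)-yF(xyz,z))/(1-y)$, which combines with the factor $1/(1-x)$ to give the claimed formula.

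For $G_2$ the strategy is analogous but cleaner, using only $\lambda_1,\lambda_2$ to encode $i\geq \ell$ and $\ell\geq j$, and summing over the free index $\ell$ to obtain $G_2(x,z)=\underset{\geq}{\mathrm{\Omega}}\ F(z\lambda_1,z/\lambda_2)/(1-x\lambda_2/\lambda_1)$. Eliminating $\lambda_2$ via Proposition~\ref{omegaprop} with $A=x/\lambda_1$ produces $F(z\lambda_1,xz/\lambda_1)/(1-x/\lambda_1)$, whose numerator again has only non-negative $\lambda_1$-powers by $i\geq j$. A single application of Proposition~\ref{omegauLamb} with $A=x$ then yields $(F(z,xz)-xF(xz,z))/(1-x)$, as desired.

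The main obstacle is not algebraic complexity but the justification step: extending Propositions~\ref{omegaprop} and~\ref{omegauLamb} from the rational setting they were proved in to arbitrary formal power series $F$ with coefficients in a ring $\mathbb{H}$, and checking at each stage that all intermediate expressions still belong to the appropriate iterated Laurent series field $\overline{\mathbb{K}}$. The constraint $i\geq j$ built into $F$ is precisely what ensures that the substitutions $F(xz\lambda_2,yz/\lambda_2)$ and $F(z\lambda_1,xz/\lambda_1)$ contain only non-negative $\lambda$-powers, so that Proposition~\ref{omegauLamb} applies unambiguously at the final step. Once this observation is in place, both identities follow from just two propositional applications each.
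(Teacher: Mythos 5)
Your proposal is correct and follows essentially the same route as the paper: the same encoding of the chains by $\lambda_1,\lambda_2,\lambda_3$ (resp. $\lambda_1,\lambda_2$), the same order of elimination, and the same key observation that $F(xz\lambda_2,yz/\lambda_2)$ and $F(z\lambda_1,xz/\lambda_1)$ contain only non-negative powers of the remaining $\lambda$ because of the constraint $i\geq j$, so that the final step is a dual contribution in the sense of Proposition~\ref{omegauLamb}. The only cosmetic difference is that you phrase each elimination as an application of Propositions~\ref{omegaprop} and~\ref{omegauLamb} (extended by linearity to formal series over $\mathbb{H}$), whereas the paper uses its underlined-contribution notation for the identical computation.
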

\begin{proof}
By the definition of $\mathrm{\Omega}_\geq$ and the geometric series summation, we obtain
\begin{align*}
G_1(x,y,z)&=\underset{\geq}{\mathrm{\Omega}}\ \sum_{m,i,\ell,j\geq 0}a_{ij}\lambda_1^{m-i}\lambda_2^{i-\ell}\lambda_3^{\ell-j}x^my^{\ell}z^{i+j}
\\&=\underset{\geq}{\mathrm{\Omega}}\ \sum_{i,j\geq 0}a_{ij}\left(\frac{z\lambda_2}{\lambda_1}\right)^i\left(\frac{z}{\lambda_3}\right)^j\frac{1}{(1-x\lambda_1)(1-\frac{y\lambda_3}{\lambda_2})}
\\&=\underset{\geq}{\mathrm{\Omega}}\ F\left(\frac{z\lambda_2}{\lambda_1},\frac{z}{\lambda_3}\right)\frac{1}{(1-x\lambda_1)(1-\frac{y\lambda_3}{\lambda_2})}.
\end{align*}
The operator $\mathrm{\Omega}_{\geq}$ eliminates the variables $\lambda_1$, $\lambda_2$, and $\lambda_3$. We first eliminate $\lambda_1$ and $\lambda_3$, then $\lambda_2$. Applying this procedure:
\begin{align*}
G_1(x,y,z)&=\underset{\geq}{\mathrm{\Omega}}\ F\left(\frac{z\lambda_2}{\lambda_1},\frac{z}{\lambda_3}\right)\frac{1}{\underline{(1-x\lambda_1)}(1-\frac{y\lambda_3}{\lambda_2})}
=\underset{\geq}{\mathrm{\Omega}}\ F\left(xz\lambda_2,\frac{z}{\lambda_3}\right)\frac{1}{(1-x)\underline{(1-\frac{y\lambda_3}{\lambda_2})}}
\\&=\underset{\geq}{\mathrm{\Omega}}\ F\left(xz\lambda_2,\frac{yz}{\lambda_2}\right)\frac{1}{(1-x)(1-\frac{y}{\lambda_2})}.
\end{align*}
Note that the power series expansion of $F(xz\lambda_2, \frac{yz}{\lambda_2})$ contains only non-negative powers of $\lambda_2$. Considering the dual contribution in $\lambda_2$, we derive
\begin{align*}
G_1(x,y,z)&=\underset{\geq}{\mathrm{\Omega}}\ F\left(xz\lambda_2,\frac{yz}{\lambda_2}\right)\frac{1}{(1-x)\underline{(1-\frac{y}{\lambda_2})}}
=\frac{F(xz,yz)}{(1-x)(1-y)}-\frac{\lambda_2\cdot F(xz\lambda_2,\frac{yz}{\lambda_2})}{(1-x)\underline{(\lambda_2-y)}}
\\&=\frac{F(xz,yz)-yF(xyz,z)}{(1-x)(1-y)}.
\end{align*}
Similarly, for $G_2(x,z)$, we obtain
\begin{align*}
G_2(x,z)&=\underset{\geq}{\mathrm{\Omega}}\ \sum_{i,\ell,j\geq 0}a_{ij}\lambda_1^{i-\ell}\lambda_2^{\ell-j}x^{\ell}z^{i+j}
=\underset{\geq}{\mathrm{\Omega}}\ \sum_{i,j\geq 0}a_{ij}(z\lambda_1)^i\left(\frac{z}{\lambda_2}\right)^j\frac{1}{(1-\frac{x\lambda_2}{\lambda_1})}
\\&=\underset{\geq}{\mathrm{\Omega}}\ F\left(z\lambda_1,\frac{z}{\lambda_2}\right)\frac{1}{(1-\frac{x\lambda_2}{\lambda_1})}
=\underset{\geq}{\mathrm{\Omega}}\ F\left(z\lambda_1,\frac{xz}{\lambda_1}\right)\frac{1}{(1-\frac{x}{\lambda_1})}
\\&=\frac{F(z,xz)-xF(xz,z)}{1-x}.
\end{align*}
This completes the proof.
\end{proof}




\noindent
{\small \textbf{Acknowledgments:}
This work is partially supported by the National Natural Science Foundation of China [12571355].

\end{document}